\newtheorem*{theorem*}{Theorem}
\newtheorem{theorem}{Theorem}[section]
\newtheorem{lemma}[theorem]{Lemma}
\theoremstyle{definition}
\newtheorem{definition}[theorem]{Definition}
\numberwithin{equation}{section}
\setlist{label={$($\roman{enumi}\kern1pt$)$}}
\newcommand{\fr}{Fej\'er-Riesz }
\newcommand{\ip}[2]{\left<#1,#2\right>}
\newcommand{\ran}{{\mathrm{ran}}\,}
\newcommand{\clran}{\overline{\mathrm{ran}}\,}
\newcommand{\eE}{{\mathfrak E}}
\newcommand{\eF}{{\mathfrak F}}
\newcommand{\eG}{{\mathfrak G}}
\newcommand{\eH}{{\mathfrak H}}
\newcommand{\eK}{{\mathfrak K}}
\newcommand{\eL}{{\mathfrak L}}
\newcommand{\eM}{{\mathfrak M}}
\newcommand{\eW}{{\mathfrak W}}
\newcommand{\etG}{{\tilde{\mathfrak G}}}
\newcommand{\etH}{{\tilde{\mathfrak H}}}
\newcommand{\etK}{{\tilde{\mathfrak K}}}
\renewcommand{\lg}{\eL(\eG)}
\newcommand{\lh}{\eL(\eH)}
\newcommand{\lk}{\eL(\eK)}
\newcommand{\ltk}{\eL(\etK)}
\newcommand{\lgg}{\eL(\eG_1,\eG_2)}
\newcommand{\lhk}{\eL(\eH,\eK)}
\begin{document}

\title[Factoring Two Variable Trigonometric Polynomials] {Factoring
  Non-negative Operator Valued Trigonometric Polynomials in Two
  Variables}

\author{Michael~A.~Dritschel} 

\address{School of Mathematics, Statistics and Physics, Herschel
  Building, University of Newcastle, Newcastle upon Tyne NE1 7RU, UK}

\email{michael.dritschel@ncl.ac.uk}

\subjclass{Primary 47A68; Secondary 60G25, 47A56, 47B35, 42A05, 32A70,
  30E99}

\keywords{Trigonometric polynomial, Fej\'er-Riesz theorem,
  Lowdenslager criterion, Schur complement, Toeplitz operator, shift
  operator, two variables}

\thanks{I am grateful to Jim Rovnyak, Hugo Woerdeman, and Scott
  McCullough for their valuable feedback on earlier drafts of this
  paper.}

\date{\today}

\begin{abstract}
  It is shown using Schur complement techniques that on finite
  dimensional Hilbert spaces, a non-negative operator valued
  trigonometric polynomial in two variables with degree $(d_1,d_2)$
  can be written as a finite sum of hermitian squares of at most
  $2d_2$ analytic polynomials.
\end{abstract}

\maketitle

\section{Introduction}
\label{sec:intro}
\bigskip

The \fr theorem on the factorization of a non-negative trigonometric
polynomial in one variable as the hermitian square of an analytic
polynomial is now over $100$ years old.  It has become an essential
tool in both pure mathematics and engineering, especially in signal
processing.  There have been numerous generalizations, and an
especially keen interest in finding a two variable analogue.  This is
provided here, where it is proved that operator valued non-negative
trigonometric polynomials in two variables can be factored as a finite
sum of hermitian squares of analytic polynomials, with tight control
over the number and degrees of the polynomials in the factorization.

\emph{Trigonometric polynomials} are Laurent polynomials in commuting
variables $z=(z_1,\dots,z_r)$ on the $r$-torus $\mathbb T^r$
($\mathbb T$ the unit circle in the complex plane).  This paper is
concerned with polynomials that take their values in the bounded
operators $\lh$ on a Hilbert space $\eH$.  If $n=(n_1,\dots,n_r)$ is a
$r$-tuple of integers, the shorthand $z^n$ is used for
$z_1^{n_1}\cdots z_r^{n_r}$, and $-n$ for $(-n_1,\dots,-n_r)$.  Write
$z_j^*$ for $z_j^{-1}$.  Then any trigonometric polynomial has the
form $p(z) = \sum_n a_n z^n$, where the sum has only finitely many
non-zero terms, and
$p^*(z) := \sum_n a_n^* z^{*\,n} = \sum_n a_n^* z^{-n}$.  Denote by
$\mathrm{deg}\, p = d = (d_1,\dots ,d_r)$ the \emph{degree} of a
trigonometric polynomial $p$; that is, $d_j$ is the largest value of
$|n_j|$ for which $a_n \neq 0$.  A trigonometric polynomial where all
terms have powers $n$ lying in the positive orthant (that is, each
$n_j \geq 0$) is called an \emph{analytic polynomial}.

The $\lh$ valued trigonometric polynomials have a natural operator
system structure.  Those polynomials which satisfy $p^* = p$ are
termed \emph{hermitian}.  The positive cone $\mathcal C$ consists of
those hermitian polynomials $p$ for which $p(z) \geq 0$ for all
$z \in \mathbb T^r$; that is, such that $p(z)$ is a positive operator
for all $z$.  Such $p$ are termed \emph{positive}, and for all $z$ and
$f\in \eH$, $\ip{p(z)f}{f} \geq 0$.  If instead for some
$\epsilon > 0$, $\ip{p(z)f}{f} > \epsilon \|f\|^2$ for all $z$ and all
$f\neq 0$ (equivalently, $p(z) \geq \epsilon 1$), $p$ is said to be
\emph{strictly positive}.  If $p = q^*q$, $q$ an analytic polynomial,
then $p$ is called an \emph{hermitian square}.  Finite sums of such
squares are positive, and it is natural to wonder if this describes
all elements of $\mathcal C$.  For polynomials in one variable, the
\fr theorem gives a positive answer.

Here it is shown that when considering operator valued polynomials in
two variables over a finite dimensional Hilbert space, once again
there is a description of an element in the positive cone as a finite
sum of squares of analytic polynomials, both extending and improving
known results for strictly positive polynomials.  Observe that
requiring polynomials $q_j$ to be analytic when $p = \sum q_j^*q_j$ is
not particularly restrictive, since $z^{N_j} q_j$ will be an analytic
polynomial for sufficiently large $N_j$ and
$p = \sum q_j^* z^{N_j\,*} z^{N_j} q_j$.  This sort of cancellation is
the source of many of the challenges in this context.

For $\lh$ valued trigonometric polynomials in $r$ variables, the cone
$\mathcal C$ is archimedean with the polynomial $1$, which has the
value of the identity $1\in \lh$ for all $z$, as the order unit ---
that is, for any hermitian $p$, there is a positive constant $\alpha$
such that $\alpha 1 \pm p \in \mathcal C$.  There is the obvious
generalization to $M_n(\mathbb C)\otimes \lh$ valued polynomials.  The
cone is in general not closed, so attention is usually restricted to
the set $\mathcal P_N$ of hermitian polynomials of degree less than or
equal to some fixed $N = (n_1,\dots, n_d)$ with norm closed positive
cone $\mathcal C_N$.  However by the cancellation noted above,
hermitian squares can have reduced degree, so it might not be possible
to factor an element of $\mathcal C_N$ using analytic polynomials from
$\mathcal P_N$.

For polynomials taking values in $\mathbb R$, such factorization
problems are central to real algebraic geometry.  It is always
possible to express a scalar valued hermitian trigonometric polynomial
in terms of real polynomials,
\begin{equation*}
  p = \sum_n a_n z^n = \sum_n (\mathrm{Re}\, a_n) (\mathrm{Re}\,
  (x+iy)^n),  \qquad x_j = \mathrm{Re}\, z_j,\ y_j = \mathrm{Im}\,
  z_j.
\end{equation*}
Since $p$ is hermitian and $\mathrm{Re}\, (x+iy)^n$ is a real
polynomial in $x$ and $y$, it follows that $p$ is a real polynomial in
$2r$ variables.  The $r$-torus $\mathbb T^r$ is a set consisting of
those points in $\mathbb R^{2r}$ satisfying the $2r$ constraints,
${\{\pm(1-(x_j^2 + y_j^2)) \geq 0\}}_{j=1}^r$.  These describe a compact
\emph{semialgebraic set}; that is, a compact set given in terms of a
finite collection of polynomial inequalities.  A fundamental problem
then is: Given a semialgebraic set such as $\mathbb T^r$, succinctly
characterize the elements of the positive cone over this set
(generally in terms of ``sums of squares'').  Such a description is
termed a \emph{Positivstellensatz}.  See for example \cite{MR1829790}
and~\cite{MR2383959}.

Recall that in one variable, the Hardy space of functions over the
open unit disk $\mathbb D$ with values in a Hilbert space $\eH$, is
$H^2_{\eH}(\mathbb D) = \{ \sum_{n=0}^\infty c_n z^n : \sum_n
\|c_n\|^2 < \infty \}$.  An $\lh$ valued function $f$ over $\mathbb D$
is called a \emph{multiplier} if for all $g\in H^2_{\eH}(\mathbb D)$,
$f\cdot g \in H^2_{\eH}(\mathbb D)$, where the product is taken
pointwise.  A standard result is that the space of multipliers of
$H^2_{\eH}(\mathbb D)$ equals $H^\infty_{\lh}(\mathbb D)$, the bounded
$\lh$ valued analytic functions on the disk.  Obviously, this space
contains the analytic polynomials.  A multiplier $f$ is said to be
\emph{outer} if the closure of $f\cdot H^2_{\eH}(\mathbb D)$ equals
$H^2_{\eL}(\mathbb D)$, where $\eL$ is a closed subspace of $\eH$.  In
the scalar setting, for a polynomial $f$ this is equivalent to none of
the zeros lying in $\mathbb D$.

As originally formulated, the \fr theorem concerns the factorization
of positive scalar valued trigonometric polynomials in one complex
variable~\cite{Fejer1916}.  It was later proved by
Rosenblum~\cite{Rosenblum1968} (see also~\cite{RR1971,RRbook}) that
for any Hilbert space $\eH$ the theorem remains true for $\lh$ valued
trigonometric polynomials, again in one variable.

\medskip
\begin{theorem}[\fr Theorem]
  \label{thm:fr-thm}
  A positive $\lh$ valued trigonometric polynomial $p$ in a single
  variable $z$ of degree $d$ can be factored as $p=q^*q$, where $q$ is
  an outer $\lh$ valued polynomial of degree $d$.
\end{theorem}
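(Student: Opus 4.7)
\textbf{Plan}. The approach is to reduce to the strictly positive case and then apply the vector-valued Beurling-Lax theorem. For general $p \in \mathcal C$, set $p_\varepsilon = p + \varepsilon 1$, which is strictly positive of degree $d$. Granting the strictly positive case, one obtains factorizations $p_\varepsilon = q_\varepsilon^* q_\varepsilon$ with $q_\varepsilon$ outer of degree $d$. The uniform bound $\|q_\varepsilon(z)\|^2 \le \|p\|_\infty + \varepsilon$ on $\mathbb T$, together with Cauchy's integral formula, bounds the operator coefficients of $q_\varepsilon$ uniformly in $\varepsilon$, so a weak-operator cluster point coefficient-by-coefficient as $\varepsilon \to 0$ yields an analytic polynomial $q$ of degree at most $d$ with $p = q^* q$.

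For the strictly positive case, assume $p(z) \ge \delta I > 0$ uniformly on $\mathbb T$. Then $p^{1/2}$ is continuous and uniformly invertible on $\mathbb T$, so multiplication by $p^{1/2}$ is a bounded invertible operator on $L^2_{\eH}(\mathbb T)$. The subspace $\mathcal M = p^{1/2} H^2_{\eH}(\mathbb D)$ is then closed and invariant under multiplication by $z$, and the vector-valued Beurling-Lax theorem produces $\mathcal M = \Theta \cdot H^2_{\eK}(\mathbb D)$ for some inner $\Theta$ with values in $\eL(\eK,\eH)$ and an auxiliary Hilbert space $\eK$. Pointwise invertibility of $p^{1/2}$ on $\mathbb T$ forces the range of $\Theta(z)$ to be all of $\eH$ a.e., and hence $\Theta(z)$ to be unitary with $\dim\eK = \dim\eH$. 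Identifying $\eK$ with $\eH$, let $q \in H^\infty_{\lh}(\mathbb D)$ be the multiplier implementing $f \mapsto \Theta^{-1}(p^{1/2} f)$ on $H^2_{\eH}$; then $q = \Theta^* p^{1/2}$ a.e.\ on $\mathbb T$, so $q^* q = p$, and $\overline{q \cdot H^2_{\eH}} = H^2_{\eH}$ makes $q$ outer. Writing $q = \sum_{n \ge 0} c_n z^n$, the vanishing of the Fourier coefficients of $q^* q = p$ at indices $|k| > d$, combined with density of $\mathrm{ran}\, c_0$ (from outerness), forces $c_n = 0$ for $n > d$, so $\deg q = d$.

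The main technical obstacle is the preservation of outerness in the weak-coefficient limit of the reduction step, since weak limits of outer functions need not be outer in general. A standard workaround is to establish the strictly positive case via a constructive (Schur-complement-based) algorithm whose output depends continuously on the coefficients of $p_\varepsilon$; outerness at $\varepsilon = 0$ then follows by direct inspection rather than by a weak-limit argument.
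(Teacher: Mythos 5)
This theorem is cited by the paper (to Rosenblum~\cite{Rosenblum1968}, with the history in~\cite{RR1971,RRbook}) rather than proved, so there is no proof in the text to compare against; I will therefore assess the argument on its own terms.

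Your strictly positive case is essentially sound. The space $\mathcal M = p^{1/2}H^2_{\eH}$ is a simply shift-invariant subspace of $L^2_{\eH}$, the operator $f \mapsto p^{1/2}f$ is a bounded, boundedly invertible similarity intertwining the unilateral shift on $H^2_{\eH}$ with $M_z|_{\mathcal M}$, so the wandering subspace has dimension $\dim\eH$, giving $\eK\cong\eH$; pointwise invertibility of $p^{1/2}$ then forces $\Theta$ to be a.e. unitary, and $q := \Theta^*p^{1/2}$ is outer with $q^{-1} = p^{-1/2}\Theta \in H^\infty_{\lh}$. For the degree, however, your appeal to density of $\mathrm{ran}\,c_0$ is not where the force lies: the clean argument is that $z^d q^* = (z^dp)\,q^{-1}$ is a product of $H^\infty$ functions, hence analytic, hence $c_n=0$ for $n>d$. (In the merely nonnegative case $\overline{\mathrm{ran}}\,c_0 = \eF$ may be a proper subspace of $\eH$, so you cannot expect that density in $\eH$ anyway.)

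The serious gap is in the reduction from nonnegative to strictly positive. You correctly flag that outerness is not preserved under weak-operator limits, but there is a prior problem you do not flag: the very identity $p = q^*q$ does not survive a coefficient-wise WOT limit. If $c_{\varepsilon,j}\to c_j$ WOT along a subnet, one only obtains $\ip{(\sum_j c_j^*c_{j+k})h}{h} \le \liminf_\varepsilon \ip{(\sum_j c_{\varepsilon,j}^*c_{\varepsilon,j+k})h}{h}$, i.e.\ $q^*q\le p$ in the limit, not equality; operator multiplication is not jointly WOT-continuous. In finite dimensions WOT coincides with norm convergence and this evaporates, but the theorem as stated is for arbitrary $\eH$, so this is a genuine hole. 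Your proposed ``workaround''---replace the weak-limit step by a Schur-complement-based algorithm that depends continuously on the data---is not a patch for this argument but a pointer to an entirely different proof (namely the one in~\cite{MAD2004,MR2743422}, or Rosenblum's original Lowdenslager-style argument on the weighted space $L^2(p)$), both of which construct the outer factor directly for nonnegative $p$ without ever passing through the strictly positive case or a limit in $\varepsilon$. As written, the reduction step is not just missing a detail on outerness; it fails to produce a factorization at all.
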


That is, in one variable, the cone $\mathcal C_d$ in $\mathcal P_d$
equals the set of hermitian squares of (outer) analytic polynomials in
$\mathcal P_d$.

There is also a weaker multivariable version of the \fr theorem for
strictly positive trigonometric polynomials~\cite{DW2005} (see
also~\cite{MR2854907}).

\medskip
\begin{theorem}[Multivariable \fr Theorem]
  \label{thm:mv-fr-thm}
  A strictly positive $\lh$ valued trigonometric polynomial $p$ of
  degree $d$ in $r$ variables $z = (z_1,\dots, z_r)$, can be factored
  as a finite sum $p= \sum q_i^*q_i$, where each $q_i$ is an analytic
  $\lh$ valued polynomial.  If $p \geq \epsilon 1$, $\epsilon > 0$,
  then the number of polynomials in the sum and their degrees can be
  bounded in terms of $\epsilon$.
\end{theorem}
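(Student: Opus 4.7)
The plan is to reduce to the single-variable operator-valued \fr theorem (Theorem~\ref{thm:fr-thm}) via a multiplication-operator lifting, exploiting the $\epsilon$-gap of strict positivity to control the accuracy of the reduction. Write $p(z_1,\dots,z_r) = \sum_{|k|\leq d_r} P_k(z') z_r^k$, where $z' = (z_1,\dots,z_{r-1})$ and each $P_k(z')$ is an $\lh$-valued trigonometric polynomial in $r-1$ variables. Identify each $P_k$ with the multiplication operator $M_{P_k}$ on $\etH = L^2(\mathbb{T}^{r-1},\eH)$; this produces a single-variable $\eL(\etH)$-valued trigonometric polynomial $\tilde p(z_r) = \sum_k M_{P_k} z_r^k$. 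Pointwise strict positivity $p\geq\epsilon 1$ on $\mathbb{T}^r$ immediately yields $\tilde p(z_r) \geq \epsilon I_\etH$ for every $z_r\in\mathbb{T}$, so Theorem~\ref{thm:fr-thm} applies to give an outer factorization $\tilde p(z_r) = \tilde q(z_r)^*\tilde q(z_r)$ with $\tilde q(z_r) = \sum_{k=0}^{d_r} Q_k z_r^k$ and $Q_k \in \eL(\etH)$.

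The main obstacle is that the coefficients $Q_k$ are a priori arbitrary bounded operators on $\etH$, not multiplication operators by trigonometric polynomials in $z'$. To extract polynomial structure, I would exploit the essential uniqueness of the outer factor: $\tilde p(z_r)$ commutes for every $z_r$ with all translation unitaries on $\etH$ in the $z'$-variables, since it is itself a multiplication operator. The outer factor inherits this commutation property up to a constant partial isometry, forcing each $Q_k$ to be a multiplication operator $M_{f_k}$ for some $f_k \in L^\infty(\mathbb{T}^{r-1},\lh)$. A truncation of the Fourier expansion of the $f_k$ to a large enough finite box then approximates $\tilde q$ by a genuine $\lh$-valued polynomial $q$ in all $r$ variables, with approximation error in $p-q^*q$ controllable by a uniform norm bound depending on the box size.

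To convert the approximate equality $p \approx q^*q$ into an exact sum-of-squares representation, I would iterate: the residue $p - q^*q$ inherits a strict positivity bound $\epsilon' > 0$ provided the truncation was made fine enough relative to $\epsilon$, and running the same construction on the residue adds another term $q'{}^*q'$ with a new, smaller residue. The number of iterations required to reduce the residue below any prescribed threshold is controlled by $\epsilon$, giving the quantitative bounds on the number and degrees of the $q_i$. The final, delicate point, and the reason this argument uses strict positivity in an essential way, is that each iteration consumes part of the $\epsilon$-margin; a purely non-negative polynomial sits on the boundary of the cone, so this quantitative termination mechanism breaks down and a fundamentally different argument (of the sort announced in the abstract) is needed for the non-strict case.
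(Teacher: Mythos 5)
The paper does not actually prove Theorem~\ref{thm:mv-fr-thm}; it is cited from \cite{DW2005} (and \cite{MR2854907}), where the argument is Schur-complement based. Your reduction to one variable by promoting the coefficients $P_k(z')$ to multiplication operators $M_{P_k}$ on $L^2(\mathbb T^{r-1},\eH)$ and applying Theorem~\ref{thm:fr-thm} is a sensible and standard starting point, and the claim that the outer factor can be taken with multiplication-operator coefficients is indeed true. But your reason for it is wrong: a multiplication operator on $L^2(\mathbb T^{r-1},\eH)$ does \emph{not} commute with translation unitaries $f\mapsto f(w\,\cdot)$ (conjugation by a translation replaces $M_\varphi$ by $M_{\varphi(w\cdot)}$). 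What $M_{P_k}$ does commute with are the multiplications by characters $M_{z'^m}$; uniqueness of the outer factor then gives only the covariance relation $\tilde q M_{z'^m}=W_m\tilde q$ for some constant unitary $W_m$ on the outer space, not commutation, and this alone does not force $Q_k\in L^\infty(\mathbb T^{r-1},\lh)$. The clean way to see it is that the outer factor is built from Schur complements and polar decompositions, all of which stay inside the WOT-closed von Neumann algebra $L^\infty(\mathbb T^{r-1},\lh)$; this is a different argument than the one you offer.

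The more serious gap is the iteration. You assert that, with a fine enough truncation, the residue $p-q^*q$ ``inherits a strict positivity bound $\epsilon'>0$.'' This cannot be right: as the truncation is refined, $\|p-q^*q\|\to 0$, so the residue tends to $0$ and in particular cannot be bounded below by a fixed $\epsilon'>0$ (indeed, for a generic truncation it need not even be $\geq 0$ — a small-norm bound only gives $p-q^*q\geq -\delta$). A scaling trick such as replacing $q$ by $(1+\delta/\epsilon)^{-1/2}q$ makes the residue nonnegative, but then it sits on the boundary of the cone and the iteration has no strict-positivity margin left to exploit. Moreover, even if one re-injects a margin, $q^*q$ has $z'$-degree $2N$, so the residue's degree grows at each step while its norm shrinks geometrically: you get an \emph{infinite} series $\sum_k q_k^*q_k$ converging to $p$, not the \emph{finite} sum with bounded count and degree that the theorem demands. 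Converting an approximate factorization into an exact finite one is precisely the hard content of the theorem, and the quantitative claim at the end of the statement (bounds depending on $\epsilon$) is exactly the bookkeeping your termination mechanism fails to supply.
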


Unsurprising, because of the cancellations mentioned above the degree
bounds go to infinity as $\epsilon$ goes to $0$.  Examination of the
proofs yield no obvious choice of closed cone containing all strictly
positive polynomials of a fixed degree, even in two variables.

Suppose $\mathcal R = \{r_j\}_{j = 1}^n$ is a set of real polynomials
on $\mathbb R^r$ and
$\mathcal S = \{x\in \mathbb R^r : r_j(x) \geq 0 \text{ for all }r_j
\in \mathcal R\}$ is a semialgebraic set.  Let $\Delta$ be the set of
functions from $\{1,\dots,m\}$ to $\{0,1\}$.  On $\mathcal S$, there
are various collections of polynomials which are non-negative.
Besides the (finite) sums of squares of polynomials $\sum_i q_i^2$,
which are non-negative everywhere, one also has (again with finite
sums)
\begin{itemize}
\item[$\bullet$] the \emph{quadratic module} : $\sum_j r_j \sum_i
  q_{ji}^2$; and\vskip 2pt
\item[$\bullet$] the \emph{preordering}: $\sum_{\delta\in\Delta} \prod_j
  r_j^{\delta(j)}\sum_i q_{\delta i}^2$.
\end{itemize}

In the scalar setting, the multivariable \fr theorem is a special case
of Schm\"udgen's theorem~\cite{Schmudgen} (see also~\cite{MR759099}).

\begin{theorem}[Schm\"udgen's theorem]
  \label{thm:Schmuedgens-thm}
  Let $\mathcal S$ be a compact semialgebraic set in $\mathbb R^r$
  described by a finite set $\mathcal R$ of polynomials.  Any
  polynomial strictly positive over $\mathcal S$ is in the
  preordering.
\end{theorem}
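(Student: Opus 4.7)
The plan is to reduce the statement to the abstract (Krivine--Stengle) Positivstellensatz together with a representation theorem for archimedean preorderings. Throughout, write $T$ for the preordering generated by the polynomials in $\mathcal{R}$, so the claim reduces to showing that any $p$ strictly positive on $\mathcal{S}$ lies in $T$.

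The first step is to invoke the Krivine--Stengle Positivstellensatz, which asserts the existence of $s, t \in T$ with $sp = 1 + t$. This produces a \emph{multiplicative} certificate of positivity for $p$, but it falls short of showing $p$ itself lies in $T$; the task is to upgrade it to an additive certificate, and this is where compactness enters.

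The second (and main) step is to show that $T$ is \emph{archimedean}: for every polynomial $f \in \mathbb{R}[x_1,\dots,x_d]$ there exists $N \in \mathbb{N}$ with $N \pm f \in T$. Since $\mathcal{S}$ is compact, one can choose $N$ so that $N - \sum_i x_i^2$ is strictly positive on $\mathcal{S}$. Applying the first step yields $s'(N - \sum_i x_i^2) = 1 + t'$ with $s', t' \in T$, and similarly one obtains certificates for $N \pm x_i$. A clever algebraic manipulation (the core of Schm\"udgen's argument, cleanly recast by W\"ormann) exploits the factorization $N^2 - x_i^2 = (N - x_i)(N + x_i)$ together with these multiplicative certificates to conclude that $M - \sum_i x_i^2 \in T$ for some $M$. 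From this the archimedean property follows by a standard argument, since every polynomial is dominated in sup-norm by a polynomial in $\sum_i x_i^2$, and the set of elements $f$ for which $N\pm f\in T$ forms a subring closed under boundedness domination.

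The final step invokes the Kadison--Dubois/Jacobi representation theorem for archimedean preorderings: any element which is strictly positive at every $\mathbb{R}$-valued character $\chi$ of $\mathbb{R}[x_1,\dots,x_d]$ satisfying $\chi(t) \geq 0$ for all $t \in T$ must lie in $T$. Since the set of such characters is precisely $\mathcal{S}$ (by the geometric description of $T$ in terms of $\mathcal{R}$), this yields the desired representation of $p$. The principal obstacle is the archimedean step: going from the multiplicative Positivstellensatz identity $sp = 1 + t$ to the additive condition $M - \sum_i x_i^2 \in T$ is the essential advance of Schm\"udgen's theorem over Krivine--Stengle, and it is exactly here that compactness of $\mathcal{S}$ must be exploited.
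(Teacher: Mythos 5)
The paper does not prove Schm\"udgen's theorem; it is stated as known background, cited from~\cite{Schmudgen} (with~\cite{MR759099} as an operator-theoretic precursor), and never re-derived, so there is no in-paper proof to compare against. Your sketch is a correct outline of the now-standard algebraic proof due to W\"ormann, as presented for instance in~\cite{MR1829790} and~\cite{MR2383959}: Krivine--Stengle produces the multiplicative certificate $sp=1+t$; compactness together with W\"ormann's lemma (if $sg=1+t$ with $s,t$ in the preordering $T$ and $k-g\in T$ for some integer $k$, then $g\in T$, applied here to $g=N-\sum_i x_i^2$) shows that $T$ is archimedean; and the Kadison--Dubois/Jacobi representation theorem, together with the identification of the $T$-positive characters of $\mathbb R[x_1,\dots,x_d]$ with the points of $\mathcal S$, then gives $p\in T$. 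You correctly identify the archimedean step as the essential advance over Krivine--Stengle; a finished write-up would need to state and prove W\"ormann's lemma rather than merely gesture at it, and the particular factorization $N^2-x_i^2=(N-x_i)(N+x_i)$ you mention should be checked against a source, since the standard proof of that lemma is an induction on degree working directly with the identity $sg=1+t$. Finally, it is worth recording that Schm\"udgen's original proof (and Cassier's) is operator-theoretic, proceeding via the multidimensional moment problem and the spectral theorem for commuting selfadjoint operators; your purely algebraic route is more elementary and self-contained, while the operator-theoretic argument is closer in spirit to the Hilbert-space techniques used throughout the rest of this paper.
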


There are variations and refinements of this result.  For example, if
for each coordinate $x_j$, $\mathcal R$ contains the function
$c_j - x_j^2$ for some $c_j > 0$, then Putinar proved that strictly
positive polynomials are in the quadratic module over
$\mathcal R$~\cite{MR1254128}.  In addition, if in Schm\"udgen's
theorem the set $\mathcal R$ contains at most two polynomials, the
preordering can be replaced by the quadratic module~\cite{MR1829790}.
Cimpri\v{c}~\cite{MR2775777} and Hol and Scherer~\cite{MR2218126} have
extended some of these results to matrix valued polynomials.

Matters become more complicated when positivity replaces strict
positivity.  Both Schm\"udgen's and Putinar's theorems are known not
to hold then.  Indeed, Scheiderer has shown that if the dimension of a
compact semialgebraic set is $3$ or more, there will always be
positive polynomials which are not in the
preordering~\cite{MR2223624}.  On the other hand, he also proved that
under mild restrictions, for a compact two dimensional semialgebraic
set, all positive scalar valued polynomials are in the
preordering~\cite{MR2223624}.  The latter implies in particular that
in two variables, any real valued positive trigonometric polynomial is
a sum of squares of analytic polynomials.

The results of Schm\"udgen, Putinar, and Scheiderer (as well as the
various generalizations mentioned) are intimately tied to the study of
real fields, with no known analogue when the polynomial coefficients
are allowed to come from a ring of operators.  Even the matrix valued
results in~\cite{MR2775777} and~\cite{MR2218126} are likewise
connected, since they are proved by reducing to the scalar valued case
and applying the known theorems.  For this reason, it is particularly
striking that most proofs of the operator \fr theorems take a purely
analytic approach to the problem, though they say nothing about
polynomials vanishing on $\mathbb T^r$ (that is, there is no
\emph{Nullstellensatz}).

This motivates the following generalization of a part of Scheiderer's
work, a factorization theorem with tight bounds for positive operator
valued trigonometric polynomials in two variables.

\begin{theorem*}[Two variable \fr theorem]
  Let $\eK$ be a Hilbert space with $\dim \eK < \infty$.  Given a
  positive $\lk$ valued trigonometric polynomial $Q$ in two variables
  of degree $(d_1,d_2)$, there exist integers
  $m \geq d_B$ such that a positive $\lk$ valued
  trigonometric polynomial $Q$ can be factored as a sum of at most
  $2d_2$ hermitian squares of $\lk$ valued, analytic polynomials with
  degrees bounded by $({d'}_1,2d_2-1)$, ${d'}_1 = \max\{d_A, m\}$.
\end{theorem*}

The case where either $d_1$ or $d_2$ equals $0$ can be excluded as it
is covered by the single variable \fr theorem.  The principle
techniques are a hybrid of Rosenblum's approach to the operator \fr
theorems using the Lowdenslager criterion and an elaboration of the
Schur complement approach found in \cite{MAD2004} and \cite{DW2005}.
Though these methods are now well known, there are several new twists.
As usual, a trigonometric polynomial is associated with a Toeplitz
operator, though in the two variable setting, this becomes a Toeplitz
operator of Toeplitz operators.  Positive trigonometric polynomials
correspond to finite degree positive Toeplitz operators (that is,
having only finitely many nonzero diagonals).  These can be viewed as
either being ``bi-infinite'' in each variable (indexed from $-\infty$
to $+\infty$) or ``singly infinite''(indexed from $0$ to $+\infty$) ,
or even some mixture of these.  In the singly infinite case, the
multiplication operators identified to the variables are commuting
isometries (in fact, unilateral shifts), while in the bi-infinite case
they are commuting unitaries (bilateral shifts).  The interplay
between these turns out to be important here.

Collecting entries of a selfadjoint finite degree Toeplitz operator
into large enough blocks, one gets a tridiagonal Toeplitz operator.
Calling the main diagonal entry $A$ and the off-diagonal entries $B$
and $B^*$, this Toeplitz operator is positive if and only if there is
a positive operator $M$ such that $\begin{pmatrix} A-M & B^* \\ B & M
\end{pmatrix} \geq 0$, and in this case the set of all such $M$ forms
a norm closed compact, convex set $\mathcal M$~\cite{MAD2004}.  The
elements of $\mathcal M$ play a similar role to the Schur complement
supported on the $(1,1)$ entry of the Toeplitz operator, which happens
to be the largest element of $\mathcal M$.  The Schur complement gives
rise to the outer factorization (unique up to multiplication by a
diagonal unitary) of the positive Toeplitz matrix.  The smallest
element of $\mathcal M$ gives the ``co-outer'' factorization.  Of
particular interest will be those extremal polynomials for which
$\mathcal M$ is a singleton.

In two variables, a complication arises in that the elements of
$\mathcal M$ may not consist solely of Toeplitz operators, and in
particular, the largest element will in general not be Toeplitz.
Nevertheless, there will always be a closed convex subset
$\mathcal M_T$ of $\mathcal M$ consisting of Toeplitz operators.  The
extremal case when $\mathcal M_T$ (rather than $\mathcal M$) is a
singleton is of central importance.  It is proved that $A$ can always
be replaced by a Toeplitz $\hat A \leq A$ so that
$\mathcal M_T = \{\hat M\}$ is a singleton.  However, while $\hat A$
and $\hat M$ will be associated to bounded trigonometric functions, it
is not evident that they necessarily have finite degree, and so
correspond to polynomials.

Another difficulty then arises.  While the \fr theorem guarantees the
existence of a factorization for a positive single variable
trigonometric \emph{polynomial}, it is well known that not all
positive trigonometric \emph{functions} can be factored as a hermitian
square of an analytic function.  Some restriction, as in for example
Szeg\H{o}'s theorem~\cite{RRbook}, is usually needed.  Despite this,
it will follow by minimality that $\hat{A}-\hat{M}$ and $\hat{M}$ have
outer factorizations $E^*E$ and $F^*F$.  It will also imply that
$B = F^*GE$, where $G = V^*_FV_E$ is Toeplitz with a bi-infinite
unitary extension, and $V_E$ and $V_F$ inner (so analytic and
isometric).  This, with the degree bounds for $B$, then lead to degree
bounds for $E$ and $F$ when $\eH$ is finite dimensional, and so for
$\hat{A}$ and $\hat{M}$.  An argument using the one variable \fr
theorem then finishes the proof of the two variable theorem.

Why is it not possible to use the same ideas to factor positive
trigonometric polynomials in three or more variables?  While the
theorem presented is about trigonometric polynomials in two variables,
frequent use is made of results concerning polynomials in one
variable, and outer factorizations of these, especially in showing
that the degrees of $\hat A$ and $\hat M$ mentioned above are finite.
Outerness (interpreted appropriately) does not necessarily apply to
analytic factors in two variables.  Indeed, there are examples of
positive polynomials in three or more variables which cannot be
factored as a sum of squares of polynomials, indirectly indicating
that there will exist positive polynomials in two variables without
outer factorizations.  Outer factorizations of multivariable
trigonometric polynomials are explored further in~\cite{DW2005}.

\section{Toeplitz and analytic operators, and their relation to
  trigonometric polynomials}
\label{sec:toepl-analyt-oper}

A \emph{shift operator} is an isometry $S$ on a Hilbert space $\eH$
with trivial unitary component in its Wold decomposition.  It is then
natural to write for some Hilbert space $\eG$,
$\eH = \eG \oplus \eG \oplus \cdots$ (identified with the Hardy space
$H^2(\eG)$), and
\begin{equation*}
S (h_0,h_1,\dots)^t = (0,h_0,h_1,\dots) ^t 
\end{equation*}
when the elements of $\eH$ are written as column vectors (here ``$t$''
indicates transpose).  The \emph{multiplicity} of $S$ is then
$\dim \eG = \dim\ker S^*$.

Fix a shift $S$.  If $T,A\in\lh$, say that $T$ is \emph{Toeplitz} if
$S^*TS=T$, and that $A$ is \emph{analytic} if $AS=SA$.  To distinguish
this case from that of Toeplitz operators on $L^2$ spaces introduced
below, such operators will be referred to as being \emph{singly
  infinite} Toeplitz operators.  Viewed as an operator on $H^2(\eG)$,
pre-multiplication of $T$ by $S^*$ has the effect of deleting the
first row of $T$ and shifting $T$ upwards by one row, while
pre-multiplication by $S$ shifts $T$ downwards by a row and setting
the first row entries to $0$.  Likewise, post-multiplication by $S$
deletes the first column and shifts left by one column.  Hence as
matrices with entries in $\lg$, such Toeplitz and analytic operators
have the forms
  \begin{equation}\label{sep30a}
  T = \begin{pmatrix}
    T_0 & T_{-1} & T_{-2} & \cdots\\
    T_1 & T_0    & T_{-1} & \ddots \\
    T_2 & T_1  & T_0 & \ddots \\
    \vdots    & \ddots  & \ddots & \ddots
  \end{pmatrix} ,
\qquad
 A = \begin{pmatrix}
    A_0 & 0 & 0 & \cdots\\
    A_1 & A_0    & 0 & \ddots \\
    A_2 & A_1  & A_0 & \ddots \\
    \vdots    & \ddots  & \ddots & \ddots
  \end{pmatrix} .
\end{equation}

An analytic operator $A$ is termed \emph{outer} if $\clran A$ is a
subspace of $\eH$ of the form $H^2(\eF)$ for some closed subspace
$\eF = \clran A_0$ of $\eG$; equivalently,
$\clran A = \bigoplus_0^\infty \clran A_0$ reduces $S$.

So far it has been assumed that the entries $T_j$ of the Toeplitz
operators $T$ are in $\lg$, and while this is necessary if $T\geq 0$,
it is also natural to more generally consider Toeplitz and analytic
operators where $T_j \in \lgg$, $\eG_1$, $\eG_2$, Hilbert spaces.

Now consider Laurent and analytic polynomials
$Q(z) = \sum_{k=-d_-}^{d_+} Q_k z^k$ and $P(z) = \sum_{k=0}^d P_k z^k$
on $\mathbb T$ with coefficients in $\lg$.  Refer to
$\deg_-(Q) = d_-$, $\deg_+(Q) = d_+$, and $\deg(Q) = d_- + d_+$ as the
\emph{degrees} of $Q$, and $\deg(P) = d$ as the degree of $P$,
assuming that $Q_{d_+}$, $Q_{-d_-}$, and $P_d$ are nonzero while
$Q_{d_+ + \ell}$, $Q_{-d_+ - \ell}$, and $P_{d+\ell}$ are zero for
$\ell \geq 1$.  The formulas
  \begin{equation}\label{aug31a}
  T_Q = \begin{pmatrix}
    Q_0 & Q_{-1} & Q_{-2} & \cdots\\
    Q_1 & Q_0    & Q_{-1} & \ddots \\
    Q_2 & Q_1  & Q_0 & \ddots \\
    \vdots    & \ddots  & \ddots & \ddots
  \end{pmatrix} ,
\qquad
 T_P = \begin{pmatrix}
    P_0 & 0 & 0 & \cdots\\
    P_1 & P_0    & 0 & \ddots \\
    P_2 & P_1  & P_0 & \ddots \\
    \vdots    & \ddots  & \ddots & \ddots
  \end{pmatrix} 
\end{equation}
then define bounded linear operators on $\eH$.  The operator $T_Q$ is
Toeplitz, while $T_P$ is analytic, and $d_-$, $d_+$, and $d$ are
likewise called the \emph{degrees} of $T_Q$ and $T_P$.  Even if $Q$
and $P$ are not polynomials, but are nevertheless bounded functions,
the operators $T_Q$ and $T_P$ will be bounded.  Moreover,
\begin{enumerate}
\item[$\bullet$] $Q(z) \ge 0$ for all $z\in\mathbb T$ if and only if
  $T_Q \ge 0$; \smallskip

\item[$\bullet$] $Q(z) = P(z)^*P(z)$ for all $z \in\mathbb T$ if and
  only if $T_Q = T_P^*T_P$.
\end{enumerate}

Recall that $Q(z) \geq 0$ means that for all $g\in \eG$,
$\ip{Q(z)g}{g} \geq 0$, while $Q(z) > 0$ if for some $\epsilon > 0$,
$\ip{Q(z)g}{g} \geq \epsilon \|g\|^2$.  Write $ Q \geq 0$ and $Q > 0$
for $Q(z) \ge 0$, respectively $Q(z) > \epsilon 1$, for all
$z\in\mathbb T$.

An analytic function $P(z)$ is \emph{outer} if the analytic Toeplitz
operator $T_P$ is outer.  The \fr theorem (Theorem~\ref{thm:fr-thm})
can be restated in terms of Toeplitz operators: \textsl{A positive
  Toeplitz operator $T \in \eL(H^2(\eG))$ of finite degree
  $d = d_+ = d_-$ has the form $F^*F$, where $F\in \eL(H^2(\eG))$ is
  an outer analytic operator of the same degree $d$ as $T$.}

While the \fr theorem states that any bounded positive Toeplitz
operator of finite degree has an analytic, and hence outer
factorization, the same need not be true if the degree is not finite.
Some additional condition needs to be imposed (see \cite{RRbook},
especially Section~3.4, and \cite[Lemma~2.3]{MR2743422}).  The next
result is a mild generalization of Lowdenslager's criterion, which
ensures an outer (and so analytic) factorization.  First, some
notation.  If $T \in \eL(H^2(\eG))$ is positive and Toeplitz (so
$T = S^*TS$, $S$ the unilateral shift) and $T = F^*F$ is a
factorization, then $FS = S_FF$, where $S_F$ is referred to as the
\emph{Lowdenslager isometry} associated to $F$.  Here, unlike in
\cite{RRbook}, the term ``factorization'' does not imply analyticity
of $F$.

\begin{lemma}[Lowdenslager criterion]
  \label{lem:Lowdenslager-criterion}
  Let $T \in \eL(H^2(\eG))$ be positive and Toeplitz.  The following
  are equivalent:
  \begin{enumerate}
  \item[(i)] There is a factorization $T = F^*F$ where $F$ is outer;
  \item[(ii)] For some factorization $T = G^*G$, the Lowdenslager
    isometry $S_G$ is a unilateral shift;
  \item[(iii)] For every factorization $T = H^*H$, the Lowdenslager
    isometry $S_H$ is a unilateral shift.
  \end{enumerate}
  In this case, the multiplicities of all the Lowdenslager isometries
  are equal.
\end{lemma}

The proof that $($\textit{i}$)$ implies $($\textit{iii}$)$ and
$($\textit{ii}$)$ implies $($\textit{i}$)$ is essentially the same as
for the classical Lowdenslager criterion~\cite{RRbook}.

A consequence of the Lowdenslager criterion is that if $E$ is analytic
then it has an \emph{inner-outer factorization}; that is, $E = VF$,
where $F$ is outer and $V$ is \emph{inner} (so analytic and
isometric).

One can extend singly infinite Toeplitz operators to
$\etH = \dots \oplus \eG\oplus \eG \oplus \eG \oplus \dots = L^2(\eG)$
simply by continuing each of the diagonals.  The shift operator becomes
the unitary bilateral shift.  The resulting \emph{bi-infinite}
Toeplitz operator is positive if and only if the same is true for the
corresponding singly infinite Toeplitz operator.

All other notions considered so far carry over naturally to the
multi-index / multivariable setting.  Only the two index / variable
case is examined, the version for three or more then being evident.
Suppose that $S_2$ is a shift operator on $\eH = \bigoplus_{j_2 =
  0}^\infty \eG$, and that $S_1$ is a shift operator on $\eG =
\bigoplus_{j_1 = 0}^\infty \eK$.  If $T$ is a Toeplitz operator on
$\eH$ with the property that each $T_{j_2}$ is a Toeplitz operator on
$\eG$, say that $T$ is a \emph{bi-Toeplitz} operator (or
\emph{multi-Toeplitz} more generally).  Call $T$ \emph{bi-analytic}
(respectively, \emph{multi-analytic}) if $T$ is analytic and each
$T_j$ is analytic.  It is sometimes convenient to shift back and forth
to the bi-infinite Toeplitz setting in one of the variables.  If
$T$ is a bi-Toeplitz operator on $\bigoplus_{j_1 = 0}^\infty
\bigoplus_{j_2 = 0}^\infty \eK$, the entries are naturally labeled by
two indices, $(j_1,j_2)$.  Let $\etG = \bigoplus_{j_2 = 0}^\infty \eK$
and $\etH = \bigoplus_{j_1 = 0}^\infty \bigoplus_{j_2 = 0}^\infty \eK
= \bigoplus_{j_1 = 0}^\infty \etG$.  The indices of $T$ can be
interchanged to get another operator $\tilde T$ on $\etH$.  The
exchange is implemented via a permutation of rows and columns
corresponding to conjugation with the unitary operator $W : \etH \to
\eH$ having the identity $1$ in the entries labeled with
$((j_1,j_2),(j_2,j_1))$ and $0$ elsewhere.

As in the single variable setting, there are Laurent and analytic
polynomials with coefficients in $\lk$, but now in $z = (z_1,z_2)$,
where $z_1$ and $z_2$ commute.  These look like
\begin{equation*}
  Q(z) = \sum_{k_2=-m_2}^{m_2} \left(\sum_{k_1=-m_1}^{m_1}
    Q_{k_2,k_1} z_1^{k_1}\right) z_2^{k_2} \quad\text{and}\quad
  P(z) = \sum_{k_2=0}^{m_2} \left(\sum_{k_1=0}^{m_1} P_{k_2,k_1}
    z_1^{k_1}\right) z_2^{k_2}.
\end{equation*}
Set $Q_{j_1,j_2} =0$ whenever $j_1\notin [-m_1,m_1]$ or $m_2\notin
[-m_2,m_2]$, and set $P_{j_1,j_2}=0$ whenever $j_1\notin [0,m_1]$ or
$j_2\notin [0,m_2]$.  This results in trigonometric polynomials in the
variable $z_2$ with coefficients which are trigonometric polynomials
in the variable $z_1$.  Much as before, the formulas
  \begin{equation}\label{aug31b}
  T_Q = (Q_{j_2-k_2,j_1-k_1})_{(j_2,j_1), (k_2,k_1) \in \mathbb
    N\times \mathbb N}
\qquad
 T_P = (Q_{j_2-k_2,j_1-k_1})_{(j_2,j_1), (k_2,k_1) \in \mathbb
    N\times \mathbb N}
\end{equation}
define bounded operators on $\eH$, the first being bi-Toeplitz and the
second bi-analytic.  If indices are interchanged and ${\tilde T}_Q$
and ${\tilde T}_P$ are viewed as operators on $\etH$, this amounts to
taking $Q$ and $P$ as polynomials in $z_1$ with coefficients which are
polynomials in $z_2$.  The pairs $(m_{1,\pm},m_{2,\pm})$ are referred
to as the \emph{degrees} of the $Q$ (equivalently, degrees of $T_Q$),
if the coefficients of the form $Q_{\pm m_{1,\pm},\pm m_{2,\pm}}$ is
nonzero, while $Q_{j_1,j_2} =0$ if $j_1\notin [-m_{1,-},m_{1,+}]$ or
$j_2\notin [-m_{2,-},m_{2,+}]$, When $Q$ is positive,
$m_{1+} = m_{1-}$ and $m_{2+} = m_{2-}$, so these degrees are
unambiguously written as $(m_1,m_2)$.

In analogy with the one variable case,
\begin{enumerate}
\item[$\bullet$] $Q(z) \ge 0$ for all $z\in\mathbb T^2$ if and only
  if $T_Q \ge 0$; \smallskip

\item[$\bullet$] $Q(z) = \sum_j P_j(z)^*P_j(z)$ for all $z
  \in\mathbb T^2$ if and only if $T_Q = \sum_j T_{P_j}^*T_{P_j}$.
\end{enumerate}

\section{Schur complements}
\label{sec:Schur-complements}

Schur complements play an essential role in several proofs of the
operator \fr theorem~\cite{MAD2004,DW2005}.  A survey of their use in
this way can be found in~\cite{MR2743422}.  Here is the definition.

\begin{definition}\label{nov4e}
  Let $\eH$ be a Hilbert space and $0 \leq T\in\lh$.  Let $\eK$ be a
  closed subspace of $\eH$, and $P_\eK \in\lhk$ the orthogonal
  projection of $\eH$ onto $\eK$.  Then there is a unique operator $0
  \leq M = M(T,\eK)\in\lk$ called the \emph{Schur complement of $T$
    supported on $\eK$}, such that
  \begin{enumerate}
  \item[(i)] $T-P_\eK^* M P_\eK \ge 0$;
  \item[(ii)] if $\widetilde M \in\lk$, $\widetilde M \ge 0$, and
    $T-P_\eK^* \widetilde M P_\eK \ge 0$, then $\widetilde M \le M$.
  \end{enumerate}
\end{definition}
 
There are several equivalent means of obtaining the Schur complement.
For example, if $T = \begin{pmatrix} A & B^* \\ B & C \end{pmatrix}$
on $\eK \oplus \eK^\bot$, $M$ is found by
\begin{equation}
\label{eq:2}
  \ip{Mf}{f} = \inf_{g\in\eK^\bot} \ip{
    \begin{pmatrix} A & B^* \\ B & C \end{pmatrix}
    \begin{pmatrix} f \\ g \end{pmatrix}}{
    \begin{pmatrix} f \\ g \end{pmatrix}}, \qquad f\in \eK.
\end{equation}

Suppose that $T$ is a positive Toeplitz operator of finite degree
$d = d_+ = d_-$ on $\eH= \bigoplus_0^\infty \eK$.  By grouping the
entries into $d\times d$ sub-matrices, $T$ can be taken to be
tridiagonal.  Write $\etK$ for $\bigoplus_0^{d-1} \eK$ and view
$\eH= \bigoplus_0^\infty \etK$.  Then
\begin{equation}
\label{eq:3}
  T =
  \begin{pmatrix}
    A & B^* & 0 & \cdots \\
    B & A & B^* & \ddots \\
    0 & B & A & \ddots \\
    \vdots & \ddots & \ddots & \ddots
  \end{pmatrix}.
\end{equation}
Let $M_+$ denote the Schur complement supported on the first copy of
$\etK$.  Then by \eqref{eq:2} and the Toeplitz structure of $T$ (see
\cite{MAD2004} or \cite{MR2743422}),
\begin{equation}
  \label{eq:4}
  \begin{pmatrix}
    A-M_+ & B^* \\
    B & M_+
  \end{pmatrix}
  \geq 0.
\end{equation}
In fact, $T\geq 0$ if and only if there is some $M\geq 0$ such that
the inequality \eqref{eq:4} holds with $M$ in place of $M_+$.  In this
case, write $\mathcal M$ for the set of positive operators $M$
satisfying \eqref{eq:4}.  This set is norm closed and convex with
maximal element equal to the Schur complement $M_+$.  There is also a
minimal element $M_-$ which is constructed by finding the maximal
element $N_+$ such that
\begin{equation*}
  \begin{pmatrix}
    N_+ & B^* \\
    B & A-N_+
  \end{pmatrix}
  \geq 0
\end{equation*}
and setting $M_- = A - N_+$.  Evidently, $N_+$ is the Schur complement
supported on the first copy of $\etK$ in
\begin{equation*}
  \begin{pmatrix}
    A & B & 0 & \cdots \\
    B^* & A & B & \ddots \\
    0 & B^* & A & \ddots \\
    \vdots & \ddots & \ddots & \ddots
  \end{pmatrix}.
\end{equation*}

In the context of positive Toeplitz operators, Schur complements have
a certain inheritance property, in that if $T_Q$ is Toeplitz on
$H^2(\mathcal H)$ and $M_n(T_Q)$ is the Schur complement supported on
the upper left $n\times n$ corner of $T_Q$, then $M_n(M_{n+1}(T_Q)) =
M_n(T_Q)$; that is, the Schur complement on the upper left $n\times n$
corner of the Schur complement on the upper left $(n+1)\times (n+1)$
corner of $T_Q$ is the same as the Schur complement on the upper left
$n\times n$ corner of $T_Q$.  In addition, if $\deg T_Q \leq n$, then
\begin{equation}
  \label{eq:5}
  M_{n+1}(T_Q) =
  \begin{pmatrix}
    Q_0 & Q_1^* & \cdots & Q_n^* \\
    Q_1 & & & \\
    \vdots & & M_n(T_Q) & \\
    Q_n & & &
  \end{pmatrix},
\end{equation}
where some $Q_j = 0$ if $j > n$.  This enables the construction of the
\fr factorization in the one variable case, via
\begin{equation*}
  \begin{split}
    & R_n(T) = M_{n+1}(T_Q) -
    \begin{pmatrix}
      & & & 0 \\
      & M_n(T_Q) & & \vdots \\
      & & & 0 \\
      0 & \cdots & 0 & 0
    \end{pmatrix}
    \\[3pt] = &
    \begin{pmatrix}
      P_d^* & 0 & \cdots & \cdots & 0 \\
      \ddots & \ddots & \ddots & \ddots & \vdots \\
      P_0^* & \ddots & \ddots & \ddots & \vdots \\
      0 & \ddots & \ddots & \ddots & 0 \\
      \vdots & \ddots & \ddots & \ddots & P_d^* \\
      \vdots & \ddots & \ddots & \ddots & \vdots \\
      0 & \cdots & \cdots & 0 & P_0^*
    \end{pmatrix}
    \begin{pmatrix}
      P_d & \ddots & P_0 & 0 & \cdots & \cdots & 0 \\
      0 & \ddots & \ddots & \ddots & \ddots & \ddots & \vdots \\
      \vdots & \ddots & \ddots & \ddots & \ddots & \ddots & \vdots \\
      \vdots & \ddots & \ddots & \ddots & \ddots & \ddots & 0 \\
      0 & \ddots & \ddots & 0 & P_d & \ddots & P_0
    \end{pmatrix}.
  \end{split}
\end{equation*}
See~\cite{DW2005} or~\cite{MR2743422} for more details.

A particularly interesting situation, termed \emph{extremal}, occurs
when $\mathcal M = \{M\}$, a singleton.  In this case, if there are
factorizations $A - M = E^*E$ and $M = F^*F$, then $B = F^*UE$, where
$U$ is unitary from $\clran E$ to $\clran F$.  However such a
factorization of $B$ with a unitary for some element $M\in \mathcal M$
does not necessarily guarantee extremality.  In order to examine this
more carefully, the following test is introduced.

\begin{lemma}
  \label{lem:extremal-test}
  Suppose that
  \begin{equation*}
    \mathcal M = \left\{ M :
        \begin{pmatrix}
          A - M & B^* \\ B & M
        \end{pmatrix}
        \geq 0\right\}
  \end{equation*}
  is non-empty with maximal element $M_+$, minimal element $M_-$, and
  $M_0 = \tfrac{1}{2}(M_+ + M_-)$.  Let $A-M_\pm = E_\pm^*E_\pm$ and
  $A-M_0 = E_0^*E_0$, $M_\pm = F_\pm^*F_\pm$, $M_0 = F_0^*F_0$, and
  $B = F_\pm^*G_\pm E_\pm = F_0^*G_0 E_0$, where
  $G_\pm : \clran E_\pm \to\clran F_\pm$ and
  $G_0: \clran E_0 \to\clran F_0$ are contractions.  The set
  $\mathcal M$ is a singleton if and only if the operators $G_+$,
  $G_-$ and $G_0$ are unitary.
\end{lemma}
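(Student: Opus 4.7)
My plan is to use the Smul'jan-type characterization: a block matrix $\bigl(\begin{smallmatrix} P & X^* \\ X & Q\end{smallmatrix}\bigr)$ with $P=R^*R$, $Q=S^*S$ is positive iff $X=S^*GR$ for some contraction $G\colon\clran R\to\clran S$. Applied to $\Phi(M):=\bigl(\begin{smallmatrix} A-M & B^* \\ B & M\end{smallmatrix}\bigr)$, this yields
\[
\Phi(M)=\begin{pmatrix} E^* & 0 \\ 0 & F^*\end{pmatrix}\begin{pmatrix} I & G^* \\ G & I\end{pmatrix}\begin{pmatrix} E & 0 \\ 0 & F\end{pmatrix}.
\]
The key algebraic observation I would exploit is that $G$ is unitary precisely when the middle matrix factors as the rank-minimal product $\bigl(\begin{smallmatrix} I \\ G\end{smallmatrix}\bigr)\bigl(\begin{smallmatrix} I & G^*\end{smallmatrix}\bigr)$, in which case $\Phi(M)=VV^*$ with $V=\bigl(\begin{smallmatrix} E^* \\ F^*G\end{smallmatrix}\bigr)$, and $\ker\Phi(M)=\{(x,y):Ex+G^*Fy=0\}$.

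For the ``$\Leftarrow$'' direction, assuming $G_+$, $G_-$, $G_*$ all unitary, I would combine $\Phi(M_*)=\tfrac12(\Phi(M_+)+\Phi(M_-))$ with the standard identity $\ker(P_1+P_2)=\ker P_1\cap\ker P_2$ for positive $P_i$ to deduce $\ker\Phi(M_*)\subseteq\ker\Phi(M_\pm)$. For $v=(x,y)\in\ker\Phi(M_*)$, direct computation gives $\Phi(M_\pm)v=\pm\bigl(\begin{smallmatrix} -\Delta x \\ \Delta y\end{smallmatrix}\bigr)$ with $\Delta:=\tfrac12(M_+-M_-)\ge 0$, and combined with $\Phi(M_\pm)v=0$ this forces $\Delta x=\Delta y=0$. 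Unitarity of $G_*$ then guarantees that for every $x\in\eK$ the equation $F_*y=-G_*E_*x$ has a solution $y$ (cleanly in finite dimensions, and via a straightforward approximation argument otherwise), so $(x,y)\in\ker\Phi(M_*)$, giving $\Delta x=0$. Since $x$ was arbitrary, $\Delta=0$, so $M_+=M_-$ and $\mathcal M=\{M_*\}$ is a singleton.

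For the ``$\Rightarrow$'' direction, singleton forces $M_+=M_-=M_*=M$, so $(E,F,G):=(E_+,F_+,G_+)=(E_-,F_-,G_-)=(E_*,F_*,G_*)$ and the claim reduces to ``$G$ is unitary''. I would argue by contradiction. Suppose first $G$ is not co-isometric; let $P$ be the orthogonal projection in $\clran F$ onto $(\clran G)^\perp\cap\clran F\ne\{0\}$ and split $F=F_1+F_2$ with $F_1:=(I-P)F$ and $F_2:=PF$. Since $PG=0$, one obtains $B=F_1^*GE$ and $M=F_1^*F_1+F_2^*F_2$. Setting $M':=M-F^*PF=F_1^*F_1$, the $3\times 3$ inflation
\[
\Phi(M')=\begin{pmatrix} E^* & 0 & F_2^* \\ 0 & F_1^* & 0\end{pmatrix}\begin{pmatrix} I & G^* & 0 \\ G & I & 0 \\ 0 & 0 & I\end{pmatrix}\begin{pmatrix} E & 0 \\ 0 & F_1 \\ F_2 & 0\end{pmatrix}
\]
(whose middle factor is positive as the block-diagonal sum of $\bigl(\begin{smallmatrix} I & G^* \\ G & I\end{smallmatrix}\bigr)$ and $I$) exhibits $\Phi(M')\ge 0$, so $M'\in\mathcal M$; moreover $F^*PF\ne 0$ since $\clran F\supsetneq\clran G$, hence $M'\ne M$, contradicting the singleton. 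The case $G$ not isometric is handled dually: with $Q$ the orthogonal projection in $\clran E$ onto $(\clran G^*)^\perp\cap\clran E$, the modification $M'':=M+E^*QE$ again produces a distinct element of $\mathcal M$. Hence $G$ is unitary.

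I expect the main technical obstacle to be the ``$\Rightarrow$'' direction's construction of a distinct element of $\mathcal M$; the $3\times 3$ inflation above sidesteps any square-root or pseudo-inverse calculus. The secondary delicate point in ``$\Leftarrow$'' is the solvability of $F_*y=-G_*E_*x$, which is exactly where the strict unitarity of $G_*$ (as opposed to mere contractivity) becomes essential.
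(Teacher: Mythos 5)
Your two directions have different status.

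The ``$\Rightarrow$'' direction is essentially correct, and your explicit $3\times 3$ inflations are a clean way to realize what the paper dispatches with ``it is straightforward to verify that $G_+$ is an isometry and $G_-$ is a co-isometry'': your $M'$ and $M''$ are exactly the operators that would witness failure of maximality of $M_+$ or minimality of $M_-$. (Your $M'$, $M''$ are manifestly positive and give positive $\Phi(M')$, $\Phi(M'')$, and $F^*PF\neq 0$, $E^*QE\neq 0$ follow from $\clran G\subsetneq\clran F$, $\ker G\neq\{0\}$ respectively.) This half is fine, and arguably more self-contained than what the paper offers.

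The ``$\Leftarrow$'' direction has a genuine gap, and it is precisely the point you flag yourself. Your kernel argument proves $\Delta x = 0$ only for those $x$ with $G_* E_* x \in \mathrm{ran}\, F_*$, and unitarity of $G_*$ only gives $G_* E_* x \in \clran F_*$, not membership in the (generally non-closed) range. The ``straightforward approximation argument'' does not repair this: choosing $y_n$ with $F_* y_n \to -G_* E_* x$ gives $\ip{\Phi(M_*)(x,y_n)}{(x,y_n)} \to 0$, and then the two inequalities $\ip{\Phi(M_\pm)(x,y_n)}{(x,y_n)}\ge 0$ read
\[
\ip{\Delta x}{x} - \ip{\Phi(M_*)(x,y_n)}{(x,y_n)} \;\le\; \ip{\Delta y_n}{y_n} \;\le\; \ip{\Delta x}{x} + \ip{\Phi(M_*)(x,y_n)}{(x,y_n)},
\]
which only yields $\ip{\Delta y_n}{y_n}\to\ip{\Delta x}{x}$ --- no contradiction, and certainly not $\Delta x = 0$. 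Worse, the set of ``good'' $x$, namely $\{x : G_*E_*x\in\mathrm{ran}\,F_*\}$, need not be dense: it can shrink to $\ker E_*$ when $\mathrm{ran}\,F_*$ and $G_*(\mathrm{ran}\,E_*)$ are dense subspaces of $\clran F_*$ with trivial intersection, which is possible for compact $E_*,F_*$. The paper avoids all of this by staying algebraic: after normalizing so that $G_\pm = 1$ on $\clran E_\pm = \clran F_\pm$, Douglas' lemma produces positive contractions $H_\pm$ with $E_+ = H_- E_-$ and $F_- = H_+ F_+$; equating the two expressions for $B$ forces $H_+ = H_- =: H$; convexity then gives
\[
\Phi(M_*) = \tfrac12\, R^*\!\begin{pmatrix} 1+H^2 & 2H \\ 2H & 1+H^2\end{pmatrix}\! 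R,
\]
and unitarity of $G_*$ forces the Schur complement $(1+H^2) - 4H(1+H^2)^{-1}H = (1+H^2)^{-1}(1-H^2)^2$ of the middle block to vanish, whence $H=1$ and $M_+ = M_-$. That route never needs anything to lie in an actual (as opposed to closed) range, which is exactly the infinite-dimensional obstruction your kernel approach runs into.
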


\begin{proof}
  Let $M_+$, $M_-$ be the maximal and minimal elements of $\mathcal
  M$.  Suppose that $G_+$, $G_-$ and $G_0$ are the corresponding
  contractions as in the statement of the lemma.  It is
  straightforward to verify that $G_+$ is an isometry and $G_-$ is a
  co-isometry.  Hence if $\mathcal M$ is a singleton, $G_+ = G_- =
  G_0$ is unitary.

  Conversely, assume that for every $M\in \{ M_+,M_-,M_0\}$, there are
  factorizations as in the statement of the lemma, where the operators
  $G_+$, $G_-$ and $G_0$ are unitary.  Without loss of generality, by
  absorbing $G_\pm$ into $E_\pm$ or $F_\pm$, it is possible to take
  $G_\pm = 1$ on $\clran E_\pm = \clran F_\pm$.

  Since $M_- \leq M_+$, there exist contractions
  $H_-: \clran E_- \to \clran E_+$, $H_+: \clran F_+ \to \clran F_-$
  with dense ranges such that $E_+ = H_- E_-$ and $F_- = H_+ F_+$.
  Let $H_- = V_- |H_-|$ be the polar decomposition.  Then
  $E_+^*E_+ = E_+^*V_-V_-^*E_+$, so replacing $E_+$ by $V_-^*E_+$ if
  necessary, $\clran E_+ = \clran E_-$ and $H_-$ is a positive
  contraction.  Similarly, one can take $\clran F_+ = \clran F_-$ and
  $H_+$ a positive contraction.  So
  $\clran E_+ = \clran E_- = \clran F_+ = \clran F_-$ and
  \begin{equation*}
    \begin{split}
      B &= F_-^* E_- = F_+^* H_+ E_- \\
      & = F_+^* E_+ = F_+^* H_- E_-.
    \end{split}
  \end{equation*}
  Consequently, $H_+ = H_-$.  Denote this operator by $H$.

  The set $\mathcal M$ is convex, so
  \begin{equation*}
    \begin{split}
      0 & \leq
      \begin{pmatrix}
        A - \tfrac{1}{2}(M_+ + M_-) & B^* \\
        B & \tfrac{1}{2}(M_+ + M_-)
      \end{pmatrix}
      = \frac{1}{2}\left[
        \begin{pmatrix}
          E_+^*E_+ & B^* \\
          B & F_+^*F_+
      \end{pmatrix}
      +
        \begin{pmatrix}
          E_-^*E_- & B^* \\
          B & F_-^*F_-
      \end{pmatrix}
      \right] \\
      & = \frac{1}{2}\left[
        \begin{pmatrix}
          E_-^*H^2E_- & E_-^*HF_+ \\
          F_+^*HE_- & F_+^*F_+
        \end{pmatrix}
        +
        \begin{pmatrix}
          E_-^*E_- & E_-^*HF_+ \\
          F_+^*HE_- & F_+^*H^2F_+
        \end{pmatrix}
      \right] \\
      & = \frac{1}{2}
      \begin{pmatrix}
        E_-^* & 0 \\ 0 & F_+^*
      \end{pmatrix}
      \begin{pmatrix}
          1+H^2 & 2H \\
          2H & 1+H^2
        \end{pmatrix}
      \begin{pmatrix}
        E_- & 0 \\ 0 & F_+
      \end{pmatrix}.
    \end{split}
  \end{equation*}

  Recall that $M_0 = \tfrac{1}{2}(M_+ + M_-)$, $A-M_0 = E_0^*E_0$,
  $M_0 = F_0^*F_0$, and $B = F_0^*G_0E_0$, where by assumption,
  $G_0:\clran E_0 \to\clran F_0$ is unitary.  Then the Schur
  complement supported on the top left corner of $\begin{pmatrix} A -
    M_0 & B^* \\ B & M_0 \end{pmatrix}$ is zero.  Thus the Schur
  complement of the top left corner of $\begin{pmatrix} 1+H^2 & 2H \\
    2H & 1+H^2 \end{pmatrix}$ must also be zero.  Since $1+H^2$ is
  invertible, it is a standard fact that this Schur complement equals
  $(1+H^2) - 4H(1+H^2)^{-1}H$, and so $1 - 2H^2 + H^4 = 0$.  Since $H
  \geq 0$, this implies that the only point in the spectrum of $H$ is
  $\{1\}$, and so $H = 1$.  From this it follows that $M_+ = M_-$;
  that is, $\mathcal M$ is a singleton.
\end{proof}

The next theorem shows that even if $\mathcal M$ is not a singleton,
it is possible to replace $A$ by $\hat A \leq A$ so that it is.

\begin{theorem}
  \label{thm:extremals-exist}
  Suppose that the set
  \begin{equation*}
    \mathcal M = \left\{ M :
        \begin{pmatrix}
          A - M & B^* \\ B & M
        \end{pmatrix}
        \geq 0\right\}
  \end{equation*}
  is not empty.  Then there exists $\hat A \leq A$ such that the set
  \begin{equation*}
    \hat{\mathcal M} = \left\{ M :
        \begin{pmatrix}
          \hat A - M & B^* \\ B & M
        \end{pmatrix}
        \geq 0\right\}
  \end{equation*}
  is a singleton.
\end{theorem}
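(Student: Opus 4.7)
The plan is to combine a Zorn's lemma argument with the extremal test of Lemma~\ref{lem:extremal-test}, producing a minimal admissible perturbation of $A$ and then verifying that any such minimum must give a singleton via an explicit reduction built from the factorization data.

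First I would introduce the poset
\[
  \mathcal{F} := \bigl\{A' \in \eL(\etK) : 0 \leq A' \leq A\ \text{and}\ \mathcal{M}(A') \neq \emptyset\bigr\},
\]
where $\mathcal{M}(A')$ denotes the set in the statement with $A$ replaced by $A'$, ordered by the operator order. It is nonempty (it contains $A$). Every decreasing chain $\{A_\alpha\}$ in $\mathcal{F}$ admits a lower bound in $\mathcal{F}$: the strong-operator infimum $A_\infty \geq 0$ exists by monotone convergence of positive operators, and any weak-operator cluster point $M_\infty$ of associated choices $M_\alpha \in \mathcal{M}(A_\alpha)$ (uniformly bounded by $A$) satisfies the block positivity condition in the limit. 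Zorn's lemma therefore yields a minimal element $\hat A \in \mathcal{F}$.

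Next I would show that this minimal $\hat A$ must have $\hat{\mathcal{M}}(\hat A)$ a singleton. Suppose not; by Lemma~\ref{lem:extremal-test}, at least one of the contractions $\hat G_+, \hat G_-, \hat G_*$ arising from the factorizations of the pairs $(\hat A - \hat M_+, \hat M_+)$, $(\hat A - \hat M_-, \hat M_-)$ and $(\hat A - \hat M_*, \hat M_*)$ fails to be unitary. Consider the representative case where $\hat G_+$, which is always an isometry, is not a co-isometry, so that
\[
  \Delta := \hat F_+^*(1 - \hat G_+ \hat G_+^*)\hat F_+ \neq 0.
\]
Set
\[
  \tilde A := \hat E_+^* \hat E_+ + \hat F_+^* \hat G_+ \hat G_+^* \hat F_+,
  \qquad
  \tilde M := \hat F_+^* \hat G_+ \hat G_+^* \hat F_+.
\]
Using $\hat A - \hat M_+ = \hat E_+^* \hat E_+$ and $\hat M_+ = \hat F_+^* \hat F_+$, a direct computation gives $\hat A - \tilde A = \Delta \geq 0$ with $\Delta \neq 0$, so $\tilde A$ is strictly smaller than $\hat A$ in the operator order. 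Moreover, since $B = \hat F_+^* \hat G_+ \hat E_+$, the block matrix factors as
\[
  \begin{pmatrix} \tilde A - \tilde M & B^* \\ B & \tilde M \end{pmatrix}
  = \begin{pmatrix} \hat E_+ & \hat G_+^* \hat F_+ \end{pmatrix}^{\!*}
    \begin{pmatrix} \hat E_+ & \hat G_+^* \hat F_+ \end{pmatrix} \geq 0,
\]
so $\tilde M \in \mathcal{M}(\tilde A)$ and hence $\tilde A \in \mathcal{F}$, contradicting the minimality of $\hat A$. The remaining cases (when $\hat G_-$ is not an isometry, or $\hat G_*$ fails either side of unitarity) are handled by entirely symmetric constructions using the $-$ or $*$ factorization data; in each case the corresponding nonzero defect measures both the failure of unitarity and the permissible reduction of $\hat A$.

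The main obstacle is finding the right perturbation. The key insight is that the defect $\hat F_+^*(1 - \hat G_+ \hat G_+^*)\hat F_+$ is simultaneously a direct quantification of non-extremality via Lemma~\ref{lem:extremal-test} and precisely the amount by which $\hat A$ can be reduced while preserving the existence of an admissible $\tilde M$; this is what makes the block for $(\tilde A, \tilde M)$ collapse to the clean outer-product form above. A secondary technical point is verifying chain-completeness for the Zorn step, which relies on standard monotone-convergence and weak-operator compactness arguments for uniformly bounded operators.
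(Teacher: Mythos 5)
Your proposal is correct and follows essentially the same strategy as the paper: both detect non-extremality via the non-unitary contraction from Lemma~\ref{lem:extremal-test}, and both construct the identical strict reduction of $A$ (your $\tilde A$ and $\tilde M$ are precisely the paper's $A_1$ and $M_1$ in the ``isometric but not co-isometric'' case, and you correctly note the remaining cases are symmetric). The only difference is organizational: you apply Zorn's lemma directly to the poset of feasible $A'$ and derive a contradiction from minimality, whereas the paper builds a decreasing net by iterating the reduction and then invokes Zorn to show it stabilizes --- these are equivalent presentations of the same argument.
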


\begin{proof}
  Assume that the set $\mathcal M$ is not a singleton.  Then
  $\mathcal M$ has distinct maximal and minimal elements $M_+$ and
  $M_-$.  Set $M_0 = \tfrac{1}{2}(M_+ + M_-)$.  By
  Lemma~\ref{lem:extremal-test}, for $M_*$ one of these three and
  $A-M_* = E^*E$, $M_* = F^*F$, $B = F^*GE$, $G: \clran E \to\clran F$
  is a non-unitary contraction.

  The Schur complement supported on the top left corner of
  $\begin{pmatrix} A-M_* & B^* \\ B & M \end{pmatrix}$ is
  $D_+ = E^*(1-G^*G)E$, while that on the lower right corner is
  $D_- = F^*(1-GG^*)F$.  So if $G$ is not isometric, then
  $D_+ \neq 0$, while if it is not coisometric, then $D_- \neq 0$.

  In the first case, set $M_1 = M_*$, and $A_1 = A - D_+ \geq 0$.
  Then
  \begin{equation*}
    \begin{pmatrix}
      A_1 - M_1 & B^* \\ B & M_1
    \end{pmatrix}
    = 
    \begin{pmatrix}
      (A-M_*) - D_+ & B^* \\ B & M_*
    \end{pmatrix}
    \geq 0.
  \end{equation*}
  Likewise, if $G$ is not co-isometric, set $M_1 = M_* - D_-$ and
  $A_1 = A - D_- = (A-M_*) - (M_*-D_-) \geq 0$.  Then
  \begin{equation*}
    \begin{pmatrix}
      A_1 - M_1 & B^* \\ B & M_1
    \end{pmatrix}
    = 
    \begin{pmatrix}
      A-M_* & B^* \\ B & M_*-D_-
    \end{pmatrix}
    \geq 0.
  \end{equation*}
  In either case, $A_1 \lneq A$ and
  \begin{equation*}
    \mathcal M_1 :=
    \left\{ M :
      \begin{pmatrix}
        A_1 - M & B^* \\ B & M
      \end{pmatrix}
      \geq 0\right\} \subseteq \mathcal M.
  \end{equation*}

  Let $\mathcal A$ be the set of all $A' \leq A$ for which
  $\begin{pmatrix} A'-M & B^* \\ B & M \end{pmatrix} \geq 0$ for some
  $M \geq 0$.  If $\mathcal C \subset \mathcal A$ is a decreasing
  chain, then the elements of $\mathcal C$ converge strongly to
  $A_0 \geq 0$.  The corresponding maximal choice of $M$ for the
  elements of the chain themselves form a decreasing chain, and so
  they too converge strongly.  Hence there exists $M$ such that
  $\begin{pmatrix} A_0 - M & B^* \\ B & M \end{pmatrix} \geq 0$; that
  is, $A_0 \in \mathcal A$.  An application of Zorn's lemma gives a
  minimal $\hat{A} \in \mathcal A$.  The set of $M$ such that
  $\begin{pmatrix} \hat{A}-M & B^* \\ B & M \end{pmatrix} \geq 0$ is a
  singleton, since otherwise the construction given above yields
  $A' \lneq \hat{A}$.
\end{proof}

Now consider the two index / variable case.  Let
$Q(z) = \sum_{j_2 = -d_2}^{d_2} {\hat Q}_{j_2}(z_1) z_2^{j_2}$ be an
$\lk$ valued trigonometric polynomial (since the positive case will be
of interest, the simplified form for the maximal and minimal indices
is used).  As in the last section, there is an associated Toeplitz
operator
$T_{Q_1}(z_1) := ({\hat Q}_{j_2 - k_2})_{j_2, k_2 = 0}^\infty$, the
entries of which are $\lk$ valued trigonometric polynomials
${\hat Q}_{j_2} = \sum Q_{j_1,j_2} z_1^{j_1}$ of degree at most $d_1$.
Following the example of the single variable case, group these into
$d_2 \times d_2$ submatrices.  Then with
$\etK = M_{d_2}(\mathbb C) \otimes \eK$, the entries are $\ltk$ valued
trigonometric polynomials in $z_1$.  This in turn is equivalent to a
tridiagonal Toeplitz operator $T$ as in \eqref{eq:3}, with $A$ and $B$
Toeplitz operators with entries in
$\lg = \eL(\bigoplus_0^\infty \etK)$, where
\begin{equation}
  \label{eq:1}
  \begin{split}
    \deg_0 A &:= \deg_+ A = \deg_- A = \sup\{\deg_\pm {\hat Q}_{j_2} :
    0 \leq j_2 \leq d_2-1\} \\
    \deg_\pm B &= \sup\{\deg_\pm {\hat Q}_{j_2} : 1 \leq j_2 \leq
    d_2\}, \qquad \deg B =\deg_+ B + \deg_- B.
  \end{split}
\end{equation}
Just to emphasize, the bi-Toeplitz operator obtained in this way has
outer level corresponding to the variable $z_2$ and inner level
corresponding to $z_1$.  In other words, the Toeplitz operators which
are the entries of the tridiagonal Toeplitz operator correspond to
functions in the variable $z_1$.

It has been assumed that the operators $A$ and $B$ are singly infinite
Toeplitz operators.  If they are instead replaced with the
corresponding bi-infinite Toeplitz operators (so acting on $\lg =
L^2(\etK)$) --- call them $\tilde A$ and $\tilde B$ and the resulting
tridiagonal operator $\tilde T$ --- then $T$ is positive if and only
if $\tilde T$ is positive.  This is therefore a singly infinite
Toeplitz operator with coefficients which are bi-infinite Toeplitz
operators.  Write $\tilde S$ for the bilateral shift on $\etK$ and
suppose that ${\tilde M}_+$ is the Schur complement appearing in the
resulting version of \eqref{eq:4}.  Since $\tilde A$ and $\tilde B$
are bi-infinite Toeplitz operators, they are invariant under
conjugation with either $\tilde S$ or ${\tilde S}^*$.  Consequently,
\begin{equation}
  \label{eq:6}
  \begin{pmatrix}
    \tilde A - \tilde S{\tilde M}_+{\tilde S}^* & {\tilde B}^* \\
    \tilde B & \tilde S{\tilde M}_+{\tilde S}^*
  \end{pmatrix}
  \geq 0 \qquad \text{and} \qquad
  \begin{pmatrix}
    \tilde A - {\tilde S}^*{\tilde M}_+ \tilde S & {\tilde B}^* \\
    \tilde B & {\tilde S}^*{\tilde M}_+ \tilde S
  \end{pmatrix}
  \geq 0 .
\end{equation}
Hence ${\tilde S}^*{\tilde M}_+ \tilde S \leq {\tilde M}_+$ and
$\tilde S{\tilde M}_+{\tilde S}^* \leq {\tilde M}_+$.  On the
other hand, $\tilde S{\tilde S}^* = 1$, so conjugating both sides of
the first of these inequalities by $\tilde S$ gives
${\tilde M}_+ \leq {\tilde S}^*{\tilde M}_+\tilde S$, and so equality
holds.  Equality holds likewise for the other inequality.  In other
words, the Schur complement in this case is Toeplitz.
The same argument works with the minimal element ${\tilde M}_-$.  In
neither case is it necessary to assume that $\tilde A$ and $\tilde B$
have finite degrees.  There is no immediate guarantee that the degree
of $\tilde M$ is finite, even if the degrees of $\tilde A$ and
$\tilde B$ are.

The discussion is summarized in the next lemma.

\begin{lemma}
  \label{lem:2-inf-T-have-T-Scs}
  Suppose that $\tilde A$ and $\tilde B$ are bounded bi-infinite
  Toeplitz operators with entries in $\ltk$ (they need not be of
  finite degree).  Let
  $\tilde{\mathcal M} = \{\tilde M : \begin{pmatrix} \tilde A-\tilde M
    & {\tilde B}^* \\ \tilde B & \tilde M
  \end{pmatrix} \geq 0 \}$, and assume that $\tilde{\mathcal M}$ is
  non-empty.  Let $\tilde{\mathcal M}_T$ be the (non-empty) subset of
  elements of $\tilde{\mathcal M}$ which are Toeplitz.  Then the
  maximal and minimal elements of $\tilde{\mathcal M}$, ${\tilde M}_+$
  and ${\tilde M}_-$ are in $\tilde{\mathcal M}_T$ as maximal and
  minimal elements.  This set is a closed and convex.
\end{lemma}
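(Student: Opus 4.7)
The plan is to leverage the unitarity of the bilateral shift $\tilde S$ together with the doubly infinite Toeplitz identities $\tilde S^*\tilde A\tilde S=\tilde A$ and $\tilde S^*\tilde B\tilde S=\tilde B$. Most of the computational content has already been carried out in the paragraph preceding the lemma for the maximal element $\tilde M_+$; my task is to package that argument in greater generality, carry it out for $\tilde M_-$ by symmetry, and verify the topological properties of $\tilde{\mathcal M}_T$.

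First I would show that conjugation by $\tilde S$ or $\tilde S^*$ preserves membership in $\tilde{\mathcal M}$. Given $\tilde M\in\tilde{\mathcal M}$, sandwiching the defining $2\times 2$ positivity between $V^*$ and $V$ with $V=\operatorname{diag}(\tilde S,\tilde S)$ preserves positivity, and the Toeplitz identities on the corners simplify the result to
\begin{equation*}
\begin{pmatrix}\tilde A-\tilde S^*\tilde M\tilde S & \tilde B^* \\ \tilde B & \tilde S^*\tilde M\tilde S\end{pmatrix}\geq 0,
\end{equation*}
so $\tilde S^*\tilde M\tilde S\in\tilde{\mathcal M}$. Since $\tilde S$ is unitary, taking instead $V=\operatorname{diag}(\tilde S^*,\tilde S^*)$ gives in the same way $\tilde S\tilde M\tilde S^*\in\tilde{\mathcal M}$.

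Applying this invariance to the maximal element forces $\tilde S\tilde M_+\tilde S^*\leq\tilde M_+$ and $\tilde S^*\tilde M_+\tilde S\leq\tilde M_+$; conjugating the second inequality by the unitary $\tilde S$ reverses it into $\tilde M_+\leq\tilde S\tilde M_+\tilde S^*$, forcing equality, so $\tilde M_+$ is Toeplitz and hence lies in $\tilde{\mathcal M}_T$. The identical argument applied to the minimal element, with inequalities reversed by minimality, gives $\tilde M_-\in\tilde{\mathcal M}_T$. Since $\tilde M_\pm$ are already the global maximum and minimum of the larger set $\tilde{\mathcal M}$, and $\tilde{\mathcal M}_T$ is a subset containing them, they remain the maximum and minimum on the subset.

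For the structural claims, convexity is immediate since both the block positivity and the Toeplitz relation $\tilde S^*\tilde M\tilde S=\tilde M$ are preserved under convex combinations. Closedness follows because $\tilde{\mathcal M}$ is weak operator closed (positivity of an operator is WOT-closed) while the Toeplitz relation is also WOT-closed, so $\tilde{\mathcal M}_T$ is the intersection of two WOT-closed sets. I do not foresee a serious obstacle here: the only mildly delicate point is that no finite-degree assumption is placed on $\tilde A$ or $\tilde B$, but the conjugation argument uses only boundedness plus the Toeplitz identities, both of which persist in that generality.
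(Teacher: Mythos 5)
Your proposal is correct and follows essentially the same route as the paper: the discussion preceding the lemma in the paper derives the positivity of the displayed block with $\tilde S\tilde M_+\tilde S^*$ in place of $\tilde M_+$ from the Toeplitz identities on $\tilde A$ and $\tilde B$, invokes maximality, and then conjugates the resulting inequality by the unitary $\tilde S$ to force equality, exactly as you do (your version spells out both conjugations $V=\operatorname{diag}(\tilde S,\tilde S)$ and $V=\operatorname{diag}(\tilde S^*,\tilde S^*)$ a bit more explicitly, which tidies up a step the paper states tersely). Your observations on convexity and WOT-closedness, which the paper asserts without elaboration, are also correct.
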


There is a refined version of Theorem~\ref{thm:extremals-exist} for
bi-infinite Toeplitz operators.

\begin{theorem}
  \label{thm:doubly-inf-extremal}
  Let $\tilde A$ and $\tilde B$ be bi-infinite Toeplitz operators
  (they need not be of finite degree), and suppose that the set
  \begin{equation*}
    \tilde{\mathcal M} = \left\{ \tilde M :
        \begin{pmatrix}
          \tilde A - \tilde M & {\tilde B}^* \\ \tilde B & \tilde M
        \end{pmatrix}
        \geq 0\right\}
  \end{equation*}
  is not empty.  Then there exists a minimal bi-infinite Toeplitz
  operator $0 \leq \hat A \leq \tilde A$ such that the set
  \begin{equation*}
    \hat{\mathcal M} = \left\{\tilde M :
        \begin{pmatrix}
          \hat A - \tilde M & {\tilde B}^* \\ \tilde B & \tilde M
        \end{pmatrix}
        \geq 0\right\} = \{\hat M\},
  \end{equation*}
  is a singleton.  In this case, $\hat M$ is a bounded bi-infinite
  Toeplitz operator.
\end{theorem}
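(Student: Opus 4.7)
The plan is to adapt the construction in Theorem~\ref{thm:extremals-exist} to the doubly infinite Toeplitz setting, verifying at each step that Toeplitz structure is preserved. The crucial enabling facts are that, by Lemma~\ref{lem:2-inf-T-have-T-Scs}, both the maximal and minimal elements of $\tilde{\mathcal M}$ are themselves doubly infinite Toeplitz, and that for any positive doubly infinite Toeplitz operator $\tilde N$, the square root $\tilde N^{1/2}$ is again Toeplitz, since by the bounded Borel functional calculus it commutes with the unitary bilateral shift $\tilde S$.

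First I would select $\tilde M_0$ to be one of the three canonical elements $\tilde M_+$, $\tilde M_-$, $\tilde M_* = \tfrac{1}{2}(\tilde M_+ + \tilde M_-)$ appearing in Lemma~\ref{lem:extremal-test}, and factor $\tilde A - \tilde M_0 = E^*E$ and $\tilde M_0 = F^*F$ using the Toeplitz positive square roots. The contraction $G\colon \clran E \to \clran F$ defined by $\tilde B = F^*GE$ then intertwines the bilateral shift in the sense that $G\tilde S|_{\clran E} = \tilde S|_{\clran F}\, G$, which is obtained by applying the identity $\tilde S^*\tilde B\tilde S = \tilde B$ together with the fact that $E$ and $F$ commute with $\tilde S$, so that $\clran E$ and $\clran F$ are shift-reducing. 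If $G$ fails to be unitary, Lemma~\ref{lem:extremal-test} tells me I can follow the recipe in the proof of Theorem~\ref{thm:extremals-exist} to produce a strictly smaller positive operator $\tilde A_1 \leq \tilde A$ of the form $E^*G^*GE + \tilde M_0$ or $\tilde A - F^*(1-GG^*)F$. Using the intertwining of $G$ with $\tilde S$, both of these are Toeplitz, so $\tilde A_1$ remains doubly infinite Toeplitz.

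I would then iterate, exactly as in the proof of Theorem~\ref{thm:extremals-exist}, obtaining a decreasing net $\{\tilde A_\alpha\}$ of doubly infinite Toeplitz operators bounded below by $0$. Since composition with the fixed bounded operators $\tilde S$ and $\tilde S^*$ is strongly continuous, strong limits of doubly infinite Toeplitz operators are again doubly infinite Toeplitz, and so the strong limit $\hat A$ belongs to the desired class. The same Zorn's lemma argument as in Theorem~\ref{thm:extremals-exist} then forces $\hat{\mathcal M}$ to be a singleton $\{\hat M\}$, and a final appeal to Lemma~\ref{lem:2-inf-T-have-T-Scs} applied to the pair $(\hat A,\tilde B)$ shows that the maximal and minimal elements of $\hat{\mathcal M}$ are Toeplitz; since they coincide with $\hat M$, the operator $\hat M$ itself is doubly infinite Toeplitz, as required.

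The main obstacle I anticipate is the clean verification of the intertwining of $G$ with $\tilde S$ and the consequent Toeplitz-ness of $E^*G^*GE$ and $F^*GG^*F$, since neither $E$ nor $F$ need be invertible and $G$ is initially defined only on closed ranges. The argument turns on the observation that $\ker E$ and $\ker F$ are shift-reducing, because $E$ and $F$ commute with the unitary $\tilde S$; with this in hand the restriction of $\tilde S$ to $\clran E$ and $\clran F$ makes sense, the Toeplitz identity for $\tilde B$ transfers to $G$, and the required Toeplitz-ness of the subsequent operators is a direct computation. A secondary bookkeeping point, handled exactly as in Theorem~\ref{thm:extremals-exist}, is that the transfinite iteration terminates because the chain $\tilde{\mathcal M} \supseteq \tilde{\mathcal M}_1 \supseteq \tilde{\mathcal M}_2 \supseteq \cdots$ of convex sets of positive operators is bounded in cardinality.
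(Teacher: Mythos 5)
Your proposal is correct and follows essentially the same route as the paper: iterate the construction from Theorem~\ref{thm:extremals-exist} within the class of doubly infinite Toeplitz operators, using Lemma~\ref{lem:2-inf-T-have-T-Scs} to get Toeplitz maximal/minimal/midpoint candidates for $\tilde M_0$, verify that each step and the strong limit stay in the Toeplitz class, and finish with the same Zorn's lemma argument. Your explicit verification that $G$ intertwines the bilateral shift (via the shift-reducibility of $\ker E$ and $\ker F$, since the Toeplitz square roots $E,F$ commute with $\tilde S$) is a slightly more hands-on justification for the Toeplitz-ness of the successive $\tilde A_j$ than the paper's appeal to the Schur-complement computation preceding Lemma~\ref{lem:2-inf-T-have-T-Scs}, but the underlying content is the same.
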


\begin{proof}
  This is essentially a repeat of the proof of
  Theorem~\ref{thm:extremals-exist}, taking into account
  Lemma~\ref{lem:2-inf-T-have-T-Scs}, which guarantees that the
  operator $M_*$ in the proof of Theorem~\ref{thm:extremals-exist} is
  bi-infinite Toeplitz.  The same argument used for
  Lemma~\ref{lem:2-inf-T-have-T-Scs} implies that the operators
  $D_\pm$ are bi-infinite Toeplitz operators, and so $A_1$ and $M_1$
  are as well.  Strong limits of bounded sequences of bi-infinite
  Toeplitz operators are bi-infinite Toeplitz operators so the chains
  considered there have upper bounds in the class, and the result
  again follows from an application of Zorn's lemma.
\end{proof}

The arguments just used with bi-infinite Toeplitz operators do not
work in the singly infinite setting with the unilateral shift.
Indeed, the Schur complement in this case will generally not be
Toeplitz.  If it is, it can be shown by arguments to follow that it is
possible to factor the bivariate trigonometric polynomial with
analytic polynomials of the same degrees, and there are well known
examples for which this is not possible~\cite{MAD2004}.  Nevertheless,
restricting back to singly infinite operators, the following is
obtained.

\begin{lemma}
  \label{lem:Toeplitz-Ms}
  Suppose that $A$ and $B$ are bounded (singly infinite) Toeplitz
  operators with entries in $\ltk$ (they need not be of finite
  degree).  Let $\mathcal M = \{M : \begin{pmatrix} A-M & B^* \\ B & M
  \end{pmatrix} \geq 0 \}$, and assume that $\mathcal M$ is non-empty.
  Then there exists a closed, convex subset $\mathcal M_T$ of
  $\mathcal M$ with maximal and minimal elements, all the elements of
  which are Toeplitz.  Furthermore, there exists a minimal Toeplitz
  operator $0 \leq \hat A \leq A$ such that the set
  \begin{equation*}
    \hat{\mathcal M}_T = \left\{ \hat M :
        \begin{pmatrix}
          \hat A - \hat M & B^* \\ B & \hat M
        \end{pmatrix}
        \geq 0\right\}
  \end{equation*}
  is a singleton.
\end{lemma}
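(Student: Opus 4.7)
The plan is to lift to the doubly infinite Toeplitz setting, where Theorem~\ref{thm:doubly-inf-extremal} and Lemma~\ref{lem:2-inf-T-have-T-Scs} are already available, solve the problem there, and descend back via compression to the positive half. I begin by letting $\tilde A$ and $\tilde B$ denote the doubly infinite Toeplitz operators on $\etH$ obtained by continuing the diagonals of $A$ and $B$.

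The crucial preliminary step is to establish a bijection between singly infinite Toeplitz elements of $\mathcal M$ and elements of $\tilde{\mathcal M}_T$. In one direction, a singly infinite Toeplitz $M$ extends by the same diagonals to a doubly infinite Toeplitz $\tilde M$; in the other, compression back to $\eH$ converts any doubly infinite Toeplitz operator to a singly infinite Toeplitz operator with the same diagonals. To see that both directions preserve the positivity constraint, I would observe that the $2\times 2$ block
\begin{equation*}
\begin{pmatrix} A - M & B^* \\ B & M \end{pmatrix}
\end{equation*}
on $\eH \oplus \eH$, after a unitary permutation interleaving the two copies of $\eH$, becomes a singly infinite Toeplitz operator on $\bigoplus_0^\infty (\etK \oplus \etK)$ whose diagonals are $2\times 2$ matrices drawn from the diagonals of $A - M$, $B^*$, $B$, and $M$. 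Since positivity of a Toeplitz operator is the same in its singly and doubly infinite realizations, positivity of this block is equivalent to positivity of the analogous block with tildes. Defining $\mathcal M_T$ to be the image of $\tilde{\mathcal M}_T$ under compression then yields a closed convex subset of $\mathcal M$, inheriting these properties from $\tilde{\mathcal M}_T$ through the bijection.

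For the singleton assertion, I would apply Theorem~\ref{thm:doubly-inf-extremal} to $(\tilde A, \tilde B)$ to obtain a doubly infinite Toeplitz $\hat{\tilde A} \leq \tilde A$ such that $\hat{\tilde{\mathcal M}} = \{\hat{\tilde M}\}$ is a singleton whose unique element is a doubly infinite Toeplitz operator. Setting $\hat A$ and $\hat M$ to be their singly infinite compressions, these are Toeplitz operators with $\hat A \leq A$, and $\hat M \in \hat{\mathcal M}_T$ since compressing $\begin{pmatrix} \hat{\tilde A} - \hat{\tilde M} & \tilde B^* \\ \tilde B & \hat{\tilde M} \end{pmatrix} \geq 0$ to $\eH \oplus \eH$ preserves positivity. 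For uniqueness, any Toeplitz $\hat M' \in \hat{\mathcal M}_T$ extends to a doubly infinite Toeplitz operator lying in $\hat{\tilde{\mathcal M}}$ by the bijection applied with $(\hat A, B)$ in place of $(A, B)$, so it must coincide with $\hat{\tilde M}$; compressing back gives $\hat M' = \hat M$.

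The main obstacle I expect is justifying the $2\times 2$ block positivity equivalence between the singly and doubly infinite settings, as this is the key mechanism enabling transfer from Theorem~\ref{thm:doubly-inf-extremal}; once this is in hand, the rest is essentially bookkeeping. I note that finite degree of $\hat A$ is not established by this lemma, and will require the further arguments developed in the subsequent sections.
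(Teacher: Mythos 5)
Your approach is essentially identical to the paper's: you lift to the doubly infinite Toeplitz setting, apply Theorem~\ref{thm:doubly-inf-extremal} there, and compress back, relating singly infinite Toeplitz elements of $\mathcal M$ to $\tilde{\mathcal M}_T$ via extension/compression. The paper states this justification very tersely (a single paragraph following the lemma), and you correctly supply the detail it leaves implicit, namely that the positivity transfer between the two settings follows by interleaving the $2\times 2$ block into a Toeplitz operator and invoking the singly/doubly infinite positivity equivalence.
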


The elements of $\mathcal M_T$ come from restricting the set
$\tilde{\mathcal M}_T$ of operators $\tilde M$ in the bi-infinite
setting satisfying
\begin{equation*}
  \begin{pmatrix}
    \tilde A - \tilde M & {\tilde B}^* \\
    \tilde B & \tilde M
  \end{pmatrix}
  \geq 0.
\end{equation*}
The set $\mathcal M_T$ will therefore have maximal and minimal
elements since $\tilde{\mathcal M}$ does, and will be a singleton when
$\tilde{\mathcal M}$ is.

In general, positive Toeplitz operators need not have outer
factorizations.  The next theorem provides a useful exception, and is
the key ingredient in the proof of the main result.


\begin{theorem}
  \label{thm:min-A-gives-outer-fact}
  Suppose that $A$ and $B$ are bounded (singly infinite) Toeplitz
  operators with entries in $\ltk$, that $B$ has finite degree
  $(d_+,d_-)$ on $H^2(\eH)$ (that is, the smallest values such that
  $BS^{d_+}$ and $B^* S^{d_-}$ are analytic), and assume that
  $\mathcal M = \{M :
  \begin{pmatrix} A-M & B^* \\ B & M \end{pmatrix} \geq 0 \} \neq
  \emptyset$.  Let $\hat{A}$ be the minimal Toeplitz operator and
  $\hat M$ the corresponding unique Toeplitz operator such that
  $\begin{pmatrix} \hat A - \hat M & B^* \\ B & \hat M \end{pmatrix}
  \geq 0$.  Then there exist outer $E$ and $F$ such that
  $\hat{A}-\hat{M} = E^*E$ and $\hat{M} = F^*F$.  Furthermore, there
  are inner $V_E$, $V_F$ with bi-infinite unitary extensions having
  equal ranges such that for $G = V_F^* V_E$, $B = F^*GE$.
\end{theorem}

\begin{proof}
  The proof proceeds as follows.  First, it is proved that
  $\hat{A}-\hat{M}$ and $\hat{M}$ have outer factorizations.  Then it
  is shown that $R$ itself has an outer factorization, and this
  implies the existence of analytic $R_E$ and $R_F$ such that $R =
  \begin{pmatrix} R_E^* \\ R_F^* \end{pmatrix} \begin{pmatrix} R_E &
    R_F \end{pmatrix}$.  Inner-outer factorizations of $R_E$ and $R_F$
  yield inner operators $V_E$ and $V_F$, and then uniqueness of
  $\hat{M}$ implies that for the bi-infinite extensions of these,
  ${\tilde{V}}_F^*{\tilde{V}}_E$ is unitary.

  Let $\tilde{A}$ and $\tilde{M}$ be the bi-infinite extensions of
  $\hat{A}$ and $\hat{M}$ to $L^2(\eH)$, $\tilde{B}$ that of
  $B$, and $P_0$ the orthogonal projection of $L^2(\eH)$ onto
  $H^2(\eH)$.  For the time being, choose any factorizations
  $\tilde{A} - \tilde{M} = {\tilde{E}}^*\tilde{E}$ and
  $\tilde{M} = {\tilde{F}}^*\tilde{F}$.  By
  Lemma~\ref{lem:extremal-test},
  $\tilde{B} = {\tilde{F}}^*\tilde{G}\tilde{E}$, where
  $\tilde{G}: \clran \tilde{E} \to \clran \tilde{F}$ is unitary.  Set
  $E = \tilde{E}P_0^*$, $F = \tilde{F}P_0^*$, $G =
  P^*_{\clran(\tilde{F}P_0)} \tilde{G} P_{\clran(\tilde{E}P_0)}$.
  Then $\hat{A} - \hat{M} = E^*E$, $\hat{M} = F^*F$, and  $B =
  F^*GE$.

  Since $\hat{A} - \hat{M}$ and $\hat{M}$ are Toeplitz, there exist
  Lowdenslager isometries $V_E$ and $V_F$ such that $V_E E = E S$ and
  $V_F F = F S$ (see Lemma~\ref{lem:Lowdenslager-criterion} and the
  paragraph preceding it).  Let $V_E = S_E \oplus U_E$,
  $V_F = S_F \oplus U_F$ be Wold decompositions, where $S_E$, $S_F$
  are shift operators and $U_E$, $U_F$ are unitary.  The operator
  $B^* S^{d_-}$ is analytic, so for all $j \in \mathbb N$,
  $S^j (E^* G^* V_F^{d_-}) = (E^* G^* V_F^{d_-})V_F^j$.  Hence if
  $Q_j$ is the projection onto $\bigoplus_{i = 0}^j \eH$, then for any
  $j$, $ Q_j (E^* G^* V_F^{d_-})V_F^{j+1} = 0$.  Thus with
  $\mathcal N_F = \ran U_F$ and $j \geq 0$,
  \begin{equation*}
    \{0\} = Q_j S^{j+1} (E^* G^* V_F^{d_-}) \mathcal N_F = Q_j E^* G^*
    V_F^{d+j+1} \mathcal N_F = Q_j E^* G^* \mathcal N_F.
  \end{equation*}
  Hence $E^* G^* \mathcal N_F = \{0\}$.

  Let $P_F$ be the orthogonal projection with range equal to
  $\clran F \ominus \ran U_F$.  Since $S_F P_F F = P_F F S$,
  $F^*P_F F \geq 0$ is Toeplitz.  By the above calculations,
  $B = F^* P_F G E$ and $\|G\| \leq 1$, so
  \begin{equation*}
    \begin{pmatrix} 
      \hat A - F^*P_F F & B^* \\ B & F^*P_F F
    \end{pmatrix}
    \geq
    \begin{pmatrix} 
      \hat A - \hat{M} & B^* \\ B & F^*P_F F
    \end{pmatrix}
    =
    \begin{pmatrix} 
      E^*E & E^* G^* P_F F \\ F^* P_F G E & (P_F F)^* P_F F
    \end{pmatrix}
    \geq 0.
  \end{equation*}
  By uniqueness of $\hat{M}$, $P_F = 1_{\clran F}$ and
  $\mathcal N_F = \{0\}$.  Hence $F$ is analytic from
  $H^2(\eH)$ to
  $H^2(\eH_F) = \bigoplus_{0}^\infty (\ker S_F^*)$.  Since
  $\clran F = H^2(\eH_F)$, $F$ is outer.

  A similar argument with $BS^{d_+}$ shows that for
  $\mathcal N_E = \ran U_E$, $F^* G \mathcal N_E = 0$, and so if $P_E$
  is the orthogonal projection with range equal to
  $\clran E \ominus \ran U_E$, then $E^*P_EE$ is Toeplitz, $B = F^* G
  P_E E$, and
  \begin{equation*}
    \begin{pmatrix} 
      (\hat{A} - E^*(1_E - P_E)E) - \hat{M} & B^* \\ B & \hat{M}
    \end{pmatrix}
    =
    \begin{pmatrix} 
      E^*E - E^*(1_E - P_E)E) & B^* \\ B & \hat{M}
    \end{pmatrix}
    =
    \begin{pmatrix} 
      (P_E E)^* (P_E E) & E^* P_E G^* F \\ F^* G P_E E & F^*F
    \end{pmatrix}
    \geq 0.
  \end{equation*}
  Minimality of $\hat{A}$ then yields that $\mathcal N_E = \{0\}$ and
  $E$ is analytic, and in fact outer, from $H^2(\eH)$ to $\clran E =
  H^2(\eH_E) = \bigoplus_{0}^\infty (\ker S_E^*)$.  By Lowdenslager's
  criterion (Lemma~\ref{lem:Lowdenslager-criterion}), $E$ and $F$ may
  be taken to be outer on $H^2(\eH)$, and this is what is done.  We
  denote the closure of their ranges by $H^2(\eE)$ and $H^2(\eF)$,
  respectively.

  The equalities $F^*GE = B = S^*BS = F^*S^*GS E$ and the fact that
  $E$ and $F$ are outer imply that $G$ is Toeplitz.  Furthermore, if
  $\tilde{S}$, $\tilde{S}_E$, $\tilde{S}_F$ are the bilateral (and
  hence unitary) extension of $S$, $S_E$, and $S_F$, then
  $\tilde{E}\tilde{S} = \tilde{S}_E \tilde{E}$,
  ${\tilde{F}}^*\tilde{S}_F = \tilde{S} {\tilde{F}}^*$, and
  $\tilde{G}\tilde{S}_E = \tilde{S}_F \tilde{G}$.  By construction,
  $G$ is unique, $\tilde{G}: L^2(\eE) \to L^2(\eF)$ is unitary, and
  $\tilde{B} = {\tilde{F}}^* \tilde{G} \tilde{E}$.  Decompose
  $L^2(\eH) = H^2(\eH)^\bot \oplus H^2(\eH))$,
  $L^2(\eE) = H^2(\eE)^\bot \oplus H^2(\eE))$,
  $L^2(\eF) = H^2(\eF)^\bot \oplus H^2(\eF)$.  With respect to these
  decompositions,
  \begin{equation*}
    \tilde{F}^* =
    \begin{pmatrix}
      {F'}^* & Q_F^* \\ 0 & F^*
    \end{pmatrix}, \qquad
    \tilde{G} =
    \begin{pmatrix}
      G' & D \\ D' & G
    \end{pmatrix},
    \qquad\text{and} \qquad
    \tilde{E} =
    \begin{pmatrix}
      E' & 0 \\ Q_E & E
    \end{pmatrix}.
  \end{equation*}
  Matrix multiplication verifies that $\tilde{B}$ is the bi-infinite
  extension of $B = F^*GE$.  Because $F$ is outer,
  $\clran \tilde{F} = L^2(\eF)$ and
  $\clran F' = H^2(\eF)^\bot$.

  Set
  \begin{equation*}
    W :=
    \begin{pmatrix}
      D & 0 \\
      G & 1
    \end{pmatrix}
    : H^2(\eE) \oplus H^2(\eF) \to L^2(\eF).
  \end{equation*}
  The entries of $W$ are Toeplitz and the left column is isometric.
  Also, $\ran W = \ran D \oplus H^2(\eF)$.  Since
  $\tilde{G}\tilde{S}_E = \tilde{S}_F \tilde{G}$, $\tilde{S}_F W = W
  (S_E \oplus S_F)$.  The columns of
  \begin{equation*}
    \begin{pmatrix}
      D \\
      G
    \end{pmatrix}
    =
    \begin{pmatrix}
      \vdots & \ddots & \ddots & \ddots \\
      G_{-2} & G_{-3} & G_{-4} & \ddots \\
      G_{-1} & G_{-2} & G_{-3} & \ddots \\ \hline
      G_0 & G_{-1} & G_{-2} & \ddots \\
      G_1 & G_0 & G_{-1} & \ddots \\
      \vdots & \ddots & \ddots & \ddots \\
    \end{pmatrix}
  \end{equation*}
  are isometric with orthogonal ranges.  Let $\eL$ be the range of the
  first column.  Then
  $\ran \begin{pmatrix} D \\ G \end{pmatrix} = \bigoplus_0^\infty \eL$
  and $\tilde{S}_F$ acts as a unilateral shift on this space, as well
  as on ${0} \oplus H^2(\eF)$.  Set $S_W = \tilde{S}_F | \clran W$.
  Since
  $\tilde{S}_F W = W \begin{pmatrix} S_E & 0 \\ 0 &
    S_F \end{pmatrix}$, $S_W$ maps $\clran W$ isometrially to itself.
  The goal now is to show that this is a unilateral shift.

  Suppose that
  \begin{equation*}
    f=
    \begin{pmatrix}
      f_1 \\ f_2
    \end{pmatrix}
    \in \bigcap_{n=0}^\infty {\tilde{S}_F}^n \clran W.
  \end{equation*}
  Then for all $n$, there exists $h_n = \begin{pmatrix}h_{1n} \\ h_{2n}
  \end{pmatrix} \in H^2(\eE) \oplus H^2(\eF)$ such that
  \begin{equation*}
    f -
    \begin{pmatrix}
      DS_E^n h_{1n} \\ G S_E^n h_{1n} + S_F^n h_{2n}
    \end{pmatrix}
    = f - W
    \begin{pmatrix}
      S_E^n & 0 \\ 0 & S_F^n
    \end{pmatrix}
    h_n
    = f - {\tilde{S}_F}^n 
    \begin{pmatrix}
      D h_{1n} \\ Gh_{1n} + h_{2n}
    \end{pmatrix}
    = f - {\tilde{S}_F}^n W h_n \to 0.
  \end{equation*}
  
  Since ${\tilde{S}_F}^{*\,n}$ has the form
  $\begin{pmatrix} {S'_F}^{*\,n} & Q_{F,n} \\ 0 &
    {S_F}^{*\,n} \end{pmatrix}$, it follows that
  ${S_F}^{*\,n} f_2 \to 0$ and ${S_F}^* G S_E = G$, and so
  $h_{2n} \to -Gh_{1n}$.  Hence
  $h_n - \begin{pmatrix}h_{1n} \\ -Gh_{1n} \end{pmatrix} \to 0$, and
  so
  \begin{equation*}
    f - {\tilde{S}_F}^n W
    \begin{pmatrix}h_{1n} \\ -Gh_{1n} \end{pmatrix}
    = f - W 
    \begin{pmatrix}
      S_E^n & 0 \\ 0 & S_F^n
    \end{pmatrix}
    \begin{pmatrix} h_{1n} \\ {S_F}^{*\,n}GS_E^nh_{1n} \end{pmatrix}
    = f - 
    \begin{pmatrix}
      DS_E^n h_{1n} \\ (1-S_F^n{S_F}^{*\,n}) G S_E^n h_{1n}
    \end{pmatrix}
    \to 0.
  \end{equation*}
  For any $n$, the first $n$ entries of
  $(1-S_F^n{S_F}^{*\,n}) G S_E^n h_{1n}$ are $0$, so the limit as
  $n\to \infty$ gives $f_2 = 0$.

  In the Wold decomposition of $S_W$, the unitary part acts on a
  reducing subspace
  $\mathcal U = \bigcap S_W^n\clran W$, which is a subspace of
  $H^2(\eF)^\bot \oplus \{0\}$ by what ws just shown.  Since
  $S_W\mathcal U = \mathcal U$ and $\clran W$ is invariant for
  $\tilde{S}_F$, it follows that $\mathcal U$ reduces $\tilde{S}_F$.
  Thus for all $n$,
  ${\tilde{S}_F}^n \begin{pmatrix} f_1 \\ 0 \end{pmatrix} \in \mathcal
  U$ has the form $\begin{pmatrix} x_n \\ 0 \end{pmatrix}$.  Therefore
  \begin{equation*}
    \begin{pmatrix}
      f_1 \\ 0
    \end{pmatrix}
    = {\tilde{S}_F}^{*\,n}{\tilde{S}_F}^n
    \begin{pmatrix}
      f_1 \\ 0
    \end{pmatrix}
    =
    \begin{pmatrix}
      \begin{pmatrix}
        \vdots \\ f_{1,n+1} \\ f_{1n} \\ 0 \\ \vdots \\ 0
      \end{pmatrix}
      \\[10pt] 0
    \end{pmatrix},
  \end{equation*}
  and consequently $f_1 = 0$.  So
  $\mathcal U \subseteq \clran W \ominus (\ran W)^\bot = \{0\}$; that
  is, $S_W$ is a unilateral shift and
  $S_W W = W \begin{pmatrix} S_E & 0 \\ 0 & S_F \end{pmatrix}$.  Write
  $H^2(\eW)$ for $\clran W$ under the action of $S_W$.
  
  As a result,
  \begin{equation*}
    H:= W
    \begin{pmatrix}
      E & 0 \\ 0 & F
    \end{pmatrix}
    : H^2(\eH) \oplus H^2(\eH) \to H^2(\eW)
  \end{equation*}
  is analytic since the matrix on the right is outer, so has dense
  range in $H^2(\eE) \oplus H^2(\eF)$, the space on which $W$ is
  defined.  A simple calculation shows that
  \begin{equation*}
    R:=
    \begin{pmatrix}
      \hat A - \hat{M} & B^* \\ B & \hat{M}
    \end{pmatrix}
    = H^*H.
  \end{equation*}
  Decomposing $H = \begin{pmatrix} H_1 & H_2 \end{pmatrix}$,
  $H_1, H_2: H^2(\eH) \to H^2(\eW)$ are analytic with
  $H_1^* H_1 = \hat A - \hat{M} = E^*E$ and
  $H_2^* H_2 = \hat{M} = F^*F$ outer factorizations.  Hence
  $H_1 = V_EE$, $H_2 = V_FF$, with $V_E: H^2(\eE) \to H^2(\eW)$,
  $V_F: H^2(\eF) \to H^2(\eW)$ inner, and so in $B = F^* G E$,
  $G = V_F^*V_E$.

  Let ${\tilde V}_E, {\tilde V}_F$ be the bi-infinite extensions of
  $V_E, V_F$, respectively.  By assumption,
  $\tilde G = {\tilde V}_F^* {\tilde V}_E$ is unitary, so
  $\ran {\tilde V}_E = \ran {\tilde V}_F$.  Set
  $\eM = \overline{\ran V_E \vee \ran V_F}$.  Since $V_E$ and $V_F$
  are analytic, $\tilde{S}_\eH\eM = S_\eH \eM \subseteq \eM$, which
  means that $S_0 := S_\eH | \eM$ is a shift operator.  Setting
  $\eK = \ker S_0^*$, it follows that $\eM = H^2(\eK)$ and
  $S_0 = S_\eK$.  Then
  $L^2(\eK) = \ran \tilde{V}_E = \ran \tilde{V}_F$.  In other words,
  $\tilde{V}_E$ and $\tilde{V}_F$ map unitarily into $L^2(\eK)$, and
  $V_E$ and $V_F$ are analytic and isometric into $H^2(\eK)$.
  Consequently, $V_EE$ and $V_FF$ map analytically into $H^2(\eK)$,
  $A-M = E^*V_E^*V_EE$, $M = F^*V_F^*V_FF$, and $B = F^*V_F^*V_EE$.
\end{proof}

The question of degree bounds in the previous theorem is addressed
next.


\begin{theorem}
  \label{thm:min-A-w-degree-bds}
  Assume $\eH$ is finite dimensional and that the conditions of the
  last theorem hold.  Then the operators $A-M$, $M$, $E$, and $F$ have
  finite degree.
\end{theorem}

\begin{proof}
  The notation of the proof of the last result is maintained.  While
  the space $\eK \subset \ran \tilde{V}_E$, it may not be in
  $\ran V_E$.  However, the Toeplitz structure of $V_E$ compensates
  for this.

  Choose an orthonomal basis $\{f_k\}_{k=1}^d$ for $\eK$, where
  $d < \infty$.  It is clear from the definition that
  $\dim \eK \leq \dim \eH$.  Fix $\frac{1}{\sqrt{d}} > \epsilon > 0$.  Let
  $\{\tilde{g}_k\}$ be an orthonormal subset of $L^2(\eE)$ such that
  $\tilde{V}_E \tilde{g}_k = f_k$.  Set
  $\etG = \bigvee_k \tilde{g}_k$.  Write $P_{\eE}$ for the orthogonal
  projection from $L^2(\eE)$ onto $H^2(\eE)$.  Let $\epsilon > 0$.
  For $m_k$ sufficiently large,
  $\|\tilde{S}_{\eE}^{m_k} \tilde{g}_k - P_{\eE}
  (\tilde{S}_{\eE}^{m_k} \tilde{g}_k) \| < \epsilon$.  Let
  $m = \sup_k m_k$.  Then
  $\|\tilde{S}_{\eE}^m \tilde{g}_k - P_{\eE} (\tilde{S}_{\eE}^m
  \tilde{g}_k) \| < \epsilon$ for all $k$ as well.  Set
  $g_k = P_{\eE} (\tilde{S}_{\eE}^m \tilde{g}_k)$ and
  $\eG = \bigvee_k g_k$.

  Note that the $m$th rows of
  $V_E$ and $\tilde{V}_E$ are of the form
  \begin{equation*}
    \begin{split}
      K_m &= \begin{pmatrix}
        V_m & V_{m-1} & \cdots & V_0 & 0 & \cdots
      \end{pmatrix} \\
      \tilde{K}_m &= \begin{pmatrix}
        \cdots V_{m+2} & V_{m+1} & V_m & V_{m-1} & \cdots & V_0 & 0 & \cdots
      \end{pmatrix}. \\
    \end{split}
  \end{equation*}
  Let $L_m = \begin{pmatrix} \cdots V_{m+2} & V_{m+1} \end{pmatrix}$,
  so that $\tilde{K}_m = \begin{pmatrix} L_m & K_m \end{pmatrix}$.  By
  the above construction, $h_k := K_m g_k \in \eK$, and $\| h_k - f_k
  \| < \epsilon$ for all $k$.

  If $h := \sum_k \alpha_k h_k = 0$, then for
  $f := \sum_k \alpha_k f_k$,
  \begin{equation*}
    \|f\| = \|h-f\| = \left\|\sum_k \alpha_k (h_k - f_k)\right\| \leq
    \epsilon \sum_k |\alpha_k| \leq \epsilon \sqrt{d \sum_k
      |\alpha_k|^2} = \epsilon \sqrt{d} \|f\|.
  \end{equation*}
  Hence $\|f\| = 0$, and thus $\alpha_k = 0$ for all $k$.  It follows
  then that $\bigvee_k h_k = \eK$, meaning that $\ran K_m = \eK$.
  Note that it will also be the case that $\ran K_n = \eK$ for all
  $n \geq m$.

  The operator $BS^{d_+}$ is analytic, so for $R_F = V_F F$,
  $R_F^* S_{\eK}^{d_+}$ is analytic on
  $\clran V_E E = \ran V_E$, which is invariant under $S_{\eK}$ by
  analyticity of $V_E$.  In other words,
  \begin{equation*}
    R_F^* S_{\eK}^{d_+} S_{\eK} | \ran V_E = S_{\eH} R_F^*
    S_{\eK}^{d_+} | \ran V_E.
  \end{equation*}
  In particular then, since $S_{\eK}^m H^2(\eK) \subseteq \ran V_E$,
  \begin{equation*}
    R_F^* S_{\eK}^{d_+ + m} S_{\eK} |_{H^2(\eK)} = S_{\eH} R_F^*
    S_{\eK}^{d_+ + m} |_{H^2(\eK)}.
  \end{equation*}
  Hence $R_F^* S_{\eK}^{d_+ + m}$ is Toeplitz and analytic (so lower
  triangular).  The operator $R_F^*$ is co-analytic (upper
  triangular), hence $R_F$ has degree at most $d_+ + m$, as does $F$
  since it is outer.  Therefore as $\hat{M} = R_F^*R_F$,
  $\deg \hat{M} \leq d_+ + m$.

  An identical argument using analyticity of $B^*S^{d_-}$ shows that
  for some $n \geq 0$, $E^*V_E^*S_{\eK}^{d_- + n}$ is analytic.  As a
  consequence, for $R_E = V_E E$, $\deg (R_E) \leq d_- + n$, as does
  $E$ since it is outer.  Since $\hat{A} - \hat{M} = R_E^* R_E$, its
  degree is also bounded by $d_- + n$.
\end{proof}

\section{A two variable factorization theorem}
\label{sec:2var-fr}

This section contains the proof of the main result.  First some
notation.  Suppose that $T_Q$ is a Toeplitz operator associated to a
trigonometric polynomial $Q$ of bi-degree $(d_1,d_2)$ in the variables
$(z_1,z_2)$, and that $T_Q$ has degree $d_2$ with coefficients
$R_0, \dots, R_{d_2}$ which are themselves Toeplitz of degree bounded
by $d_1$.  Write
$d_A^\pm = \max\{\deg_\pm R_j,\ j = 0,\dots, d_2-1\}$,
$d_A = \max \{d_A^+, d_A^-\}$, and
$d_B^\pm = \max\{\deg_\pm R_j,\ j = 1,\dots, d_2\}$,
$d_B = \max\{d_B^+, d_B^-\}$.

Recall that in one variable, the \fr theorem (\ref{thm:fr-thm}) gives
a factorization of any operator valued trigonometric polynomial with
very strong degree bounds.  However in two variables, even in the
scalar valued case, there are no uniform bounds based on the degree of
the polynomials being factored (see Section~5 of
\cite{scheiderer-deg-bds}).  By considering a direct sum of scalar
polynomials all of the same degree but with factorization degrees
growing, one can construct an operator valued trigonometric polynomial
of finite degree without a factorization in terms of polynomials of
bounded degree.  However, if the coefficients are operators on a
finite dimensional Hilbert space, polynomial factorization is
possible.

\begin{theorem}
  \label{thm:2d-fejer-riesz-thm}
  Let $\eK$ be a Hilbert space with $\dim \eK < \infty$.  Given a
  positive $\lk$ valued trigonometric polynomial $Q$ in two variables,
  there exist integers $m \geq d_B$ such that a positive $\lk$ valued
  trigonometric polynomial $Q$ can be factored as a sum of at most
  $2d_2$ hermitian squares of $\lk$ valued, analytic polynomials with
  degrees bounded by $({d'}_1,2d_2-1)$, ${d'}_1 = \max\{d_A, m\}$.
\end{theorem}

Of course the roles of $z_1$ and $z_2$ can be reversed, potentially
altering the number and degrees of the polynomials in the
factorization.

\begin{proof}[Proof of Theorem~\ref{thm:2d-fejer-riesz-thm}]
  As observed above, the trigonometric polynomial $Q$ is associated to
  a bi-Toeplitz operator $T_Q$ in such a way that $T_Q$ is Toeplitz of
  degree $d_2$ with coefficients $R_0, \dots, R_{d_2}$ which are
  Toeplitz of finite degrees bounded by $d_A$, $d_B^\pm$, and $d_B$ as
  defined above.

  Collect the coefficients into $d_2\times d_2$ blocks to form a
  tridiagonal Toeplitz operator.  The blocks as they stand are not
  Toeplitz.  However, there is a unitary conjugation (call the unitary
  $V$) collecting the $(i,j)$th entries into $d_2\times d_2$ blocks
  acting on a space $\etK = \bigoplus_1^{d_2} \eK$.  This results in a
  positive tridiagonal Toeplitz operator
  \begin{equation*}
    \begin{pmatrix}
      A & B^* & 0 & \cdots\\
      B & A & B^* & \ddots \\
      0 & B  & \ddots & \ddots \\
      \vdots & \ddots  & \ddots & \ddots
    \end{pmatrix},
  \end{equation*}
  with entries $A$ and $B$ which are themselves Toeplitz on
  $\bigoplus_0^\infty \etK$.  Furthermore, $\deg_\pm A = d_A$, and
  $\deg_\pm B = d_B^\pm$.

  By Lemma~\ref{lem:Toeplitz-Ms}, there is are positive Toeplitz
  operators $\hat{A}$ and $\hat{M}$ on $\bigoplus_0^\infty \etK$,
  $A \geq \hat{A} \geq \hat{M}$, where $\hat{A}$ is a minimal positive
  Toeplitz operator such that
  \begin{equation*}
    \begin{pmatrix}
      \hat{A} - M & B^* \\ B & M
    \end{pmatrix},
  \end{equation*}
  for some $M \geq 0$ and $M = \hat{M}$ is the unique positive
  Toeplitz operator such that this holds for $\hat{A}$.  By
  Theorem~\ref{thm:min-A-w-degree-bds},
  $m:= \deg(\hat{M}) < \infty$, $m \geq d_B$.

  The entries of the operator
  $\begin{pmatrix} A - \hat{M} & B^* \\ B & \hat{M} \end{pmatrix} \geq
  0$ are Toeplitz of degree at most ${d'}_1 = \max\{d_A, m\}$.  These
  can be then be collected into $2 \times 2$ blocks giving a Toeplitz
  operator with entries acting on $\etK \oplus \etK$ of degree at most
  ${d'}_1$.  Applying the \fr theorem, there is a factorization in terms
  of analytic operators of at most degree ${d'}_1$, and hence
  \begin{equation*}
    L=
    \begin{pmatrix}
      A - \hat{M} & B^* \\ B & \hat{M}
    \end{pmatrix}
    =
    (F_{ij})^*_{i,j=1,2}(F_{ij})_{i,j=1,2},
  \end{equation*}
  where each $F_{i,j}$ is analytic of degree at most ${d'}_1$.

  Conjugate the terms of $L$ with the adjoint of the unitary operator
  $V$ from the first paragraph of the proof.  The operator $F_{ij}$
  becomes a $d_2 \times d_2$ operator $N_{ij}$ with entries that are
  analytic with degrees bounded by ${d'}_1$ on $\eG$.  Write
  $M = V \hat M V^*$.  Then
  \begin{equation*}
    \frac{1}{\sqrt{d_2}}
    \begin{pmatrix}
      \begin{pmatrix}
        R_0 & R_1^* & \cdots & R_{d_2-1}^* \\
        R_1 & \ddots & \ddots & \vdots \\
        \vdots & \ddots & \ddots & R_1^* \\
        R_{d_2-1} & \cdots & R_1 & R_0
      \end{pmatrix}
      - M &
      \begin{pmatrix}
        R_{d_2}^* & 0 & \cdots & 0  \\
        R_{d_2-1}^* & \ddots & \ddots & \vdots \\
        \vdots & \ddots & \ddots & 0 \\
        R_1^* & \ddots & R_{d_2-1}^* & R_{d_2}^*
      \end{pmatrix}
      \\
      \begin{pmatrix}
        R_{d_2} & R_{d_2-1} & \cdots & R_1  \\
        0 & \ddots & \ddots & \vdots \\
        \vdots & \ddots & \ddots & R_{d_2-1} \\
        0 & \ddots & 0 & R_{d_2}^*
      \end{pmatrix}
      & M
    \end{pmatrix}
    = (N_{ij})^*_{i,j = 1,2}(N_{ij})_{i,j = 1,2}.
  \end{equation*}
  For $m = 0, \dots, 2d_2-1$, define polynomials $\tilde F_m$ in $z_2$
  of degree at most $2d_2-1$ for which the coefficient of $z_2^k$ is
  the $k$th entry of the $m$th column of $(N_{ij})$.  Then
  \begin{equation*}
    \tilde Q := \sum_{-d_2}^{d_2} R_k z^k = \sum_0^{2d_2-1} \tilde
    F_k^*\tilde F_k.
  \end{equation*}
  Each $R_j$ corresponds to an analytic polynomial in $z_1$, as do all
  of the entries $N_{ijk\ell}$ of each $N_{ij}$ (here $k,\ell$ run
  from $1$ to $d_2$, and the polynomials have degree at most ${d'}_1$
  in $z_1$).  Replace the entries of $N_{ij}$ by the appropriate
  polynomials in $z_1$, and write $F_m$ for the resulting $2d_2$
  polynomials of degrees at most $({d'}_1,2d_2-1)$ in $(z_1,z_2)$.  It
  follows from the discussion towards the end of
  Section~\ref{sec:toepl-analyt-oper} that $Q= \sum_m F_m^* F_m$.
\end{proof}

Can anything be said when $\eH$ is not finite dimensional?  As noted
in Theorem~\ref{thm:mv-fr-thm}, if $Q$ is strictly positive, then it
has a factorization as a sum of hermitian squares of analytic
polynomials.  One could try to apply
Theorem~\ref{thm:min-A-gives-outer-fact} in the proof of
Theorem~\ref{thm:2d-fejer-riesz-thm} to an operator valued polynomial
which is positive but not strictly positive.  The result will be a
factorization as a sum of hermitian squares of analytic functions
which in general are not necessarily polynomials.

\section{Conclusion}
\label{sec:conclusion}

There is an aspect of the proof of
Theorem~\ref{thm:2d-fejer-riesz-thm} which is far from explicit, since
the construction of $\hat{M}$ there used a Zorn's lemma argument.
However on finite dimensional Hilbert spaces, by
Theorem~\ref{thm:min-A-w-degree-bds} the degree of $\hat{M}$ is
bounded, so it might nevertheless be the case that the construction of
this operator can be carried out concretely in many instances.
Undoubtedly, the coding will involve programming challenges.

There are numerous applications of
Theorem~\ref{thm:2d-fejer-riesz-thm}.  For example, via a Cayley
transform it is possible to construct rational factorizations of
positive matrix valued polynomials on $\mathbb R^2$~\cite{MAD2004}.
For strictly positive polynomials over $\mathbb R^n$, such a Cayley
transform gives factorizations involving a restricted class of
denominators, though it is known that for positive semidefinite
polynomials, this class may fail to be a finite~\cite{MR2159759} even
if the degrees are bounded.  The arguments from \cite{MAD2004} and the
result presented here imply that not only strictly positive, but also
non-negative matrix valued polynomials over $\mathbb R^2$, can be
factored using a restricted class of denominators, and that a finite
set of denominators works for all polynomials of bounded degree.

A number of papers have looked at the problem of factorization for
non-negative trigonometric polynomials in two variables, chiefly in
the context of engineering problems such as filter design.  These have
tended to restrict to polynomials having factorizations from the class
of stable polynomials; that is, polynomials with no zeros in the
closed bidisk~\cite{gw1,gw2,Knese1}, or to those with no zeros in the
closed disk crossed with the open disk~\cite{MR3273310}, or no zeros
on a face of the bidisk~\cite{MR3475462}.  Some address the scalar
case, while others the operator case.

As noted in the comment following the statement of
Theorem~\ref{thm:2d-fejer-riesz-thm}, there at least two
factorizations possible in the two variable setting.  Presumably this
is part of some larger family of factorizations.  Perhaps there is
some special ``central'' factorization, though it is unclear by how
much, if any, the bounds on the number of polynomials and their
degrees can be improved.  Once the minimal $\hat{A}$ is constructed,
the bi-infinite extension of the operator $\hat{M}$ is a Schur
complement, hence maximal in this context, and the ideas briefly
discussed in Section~\ref{sec:Schur-complements} for using Schur
complements to construct optimal \fr factorizations of one variable
trigonometric polynomials might improve the number and degree bounds
for the polynomials in the two variable factorization.

Finally, there are other related \emph{Positivstellens\"atze} in the
non-commutative setting which have not been touched upon (see, for
example, \cite{MR1815959,HMP1,MR2500470}).  What is observed though is
that in contrast to the commutative case, the less restrictive nature
of non-commutativity allows for factorization regardless of the number
of variables.



\end{document}